\newenvironment{dedication}
  {
   \thispagestyle{empty}
   \vspace*{\stretch{1}}
   \itshape             
   \raggedleft          
  }
  {\par 
   \vspace{\stretch{2}} 
  }
\theoremstyle{plain} 
\newtheorem{theorem}{Theorem}[section]
\newtheorem{corollary}[theorem]{Corollary}
\theoremstyle{definition} 
\newcommand{\Z}{\mathbb{Z}}
\newcommand{\C}{\mathbb{C}}
\newcommand{\re}{\operatorname{Re}}
\title{On Alan Schoen's I-WP Minimal Surface}
\author{Dami Lee and Matthias Weber and A. Tom Yerger}
\date{}							
\begin{document}
\maketitle

\begin{dedication}
Dedicated to the memory of Alan Schoen (December 11, 1924 – July 26, 2023)
\end{dedication}

\begin{figure}[h] 
   \centering
   \includegraphics[width=4in]{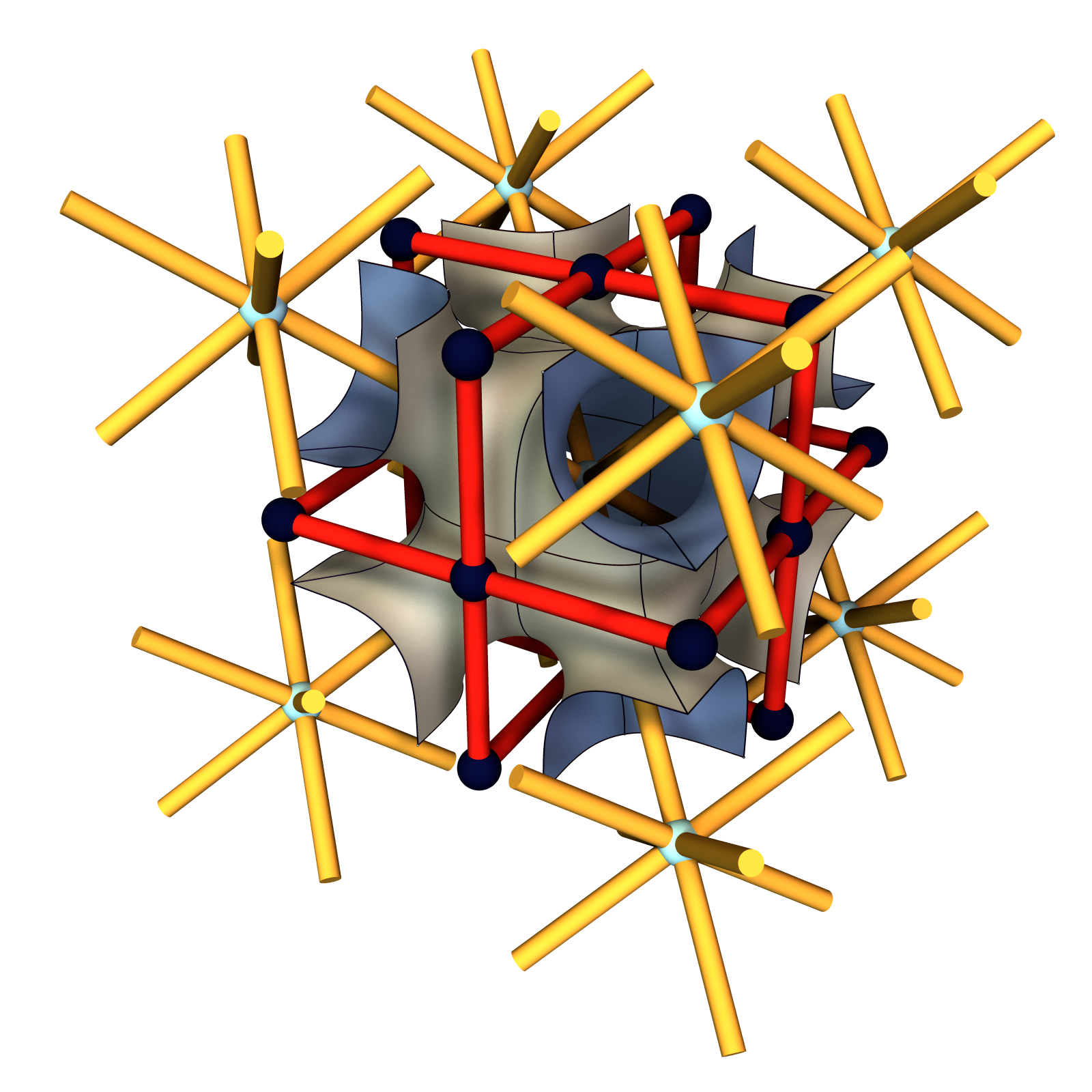} 
    \caption{Schoen's I-WP surface with its skeletal graphs}
   \label{fig:skeleton}

 \end{figure}

\begin{abstract}
We discuss in detail Alan Schoen's I-WP surface, an embedded triply periodic minimal surface of genus 4 with cubical symmetries.
We exhibit various geometric realizations of this surface with the same conformal structure and use them to prove
that the associate family of the I-WP surface contains six surfaces congruent to I-WP at Bonnet angles that are multiples of $60^\circ$.
\end{abstract}

\section{Introduction}

In his 1970 NASA report (\cite{sch1}), Alan Schoen describes 17 triply periodic embedded  minimal surfaces, most of them new. His construction was based on finding  highly symmetric skeletal graphs for their complements. One of these surfaces, the gyroid, became immensely famous because it  unexpectedly lies in the associate family of the P- and D-surface discovered by Hermann Amandus Schwarz (\cite{schw1}).
For every minimal surface there is an associate family of locally isometric minimal surfaces with identical Gauss map, parametrized by the Bonnet angle $\theta$. Surfaces in this family whose Bonnet angles differ by $\pi$ are congruent, and surfaces corresponding to the Bonnet angles  $\theta=\pm\pi/2$ are called conjugate. For instance, the P- and D-surfaces are conjugate. That there are further embedded minimal surfaces in the associate family is exceedingly rare.

Alan Schoen's gyroid was the first exception.
Later Fogden, Haeberlein and  Lidin found another  embedded  surface in the associate family of a particular Schwarz H-surface (\cite{fhl1}). Both these surfaces were  shown to be embedded in 1996 by Karsten Große-Brauckmann and Meinhard Wohlgemuth (\cite{gbw1}).

Another surface of particular interest in Alan Schoen's list is his I-WP surface. Its quotient by its translational lattice  has comparatively low genus 4, and it has cubical symmetries in addition to its translational symmetries. Alan Schoen mentions that the I-WP surface is in fact the conjugate of a surface that had been discovered already in 1934 by Berthold Steßmann (\cite{stess34}).

In their paper \cite{Lidin-IWP} 
Lidin, Hyde and Ninham give an explicit Weierstrass representation of the I-WP surface and numerically examine its associate family. They find five more congruent copies of I-WP for associate angles which  they numerically determine to be  multiples of $\pm2\pi/6$ ``up to seven decimal places''.

We will provide a simple explanation and proof  of this observation by exhibiting a hidden conformal symmetry of the I-WP surface: The order 4 rotation about any coordinate axis that leave the surface invariant are in fact third powers of an order 12 intrinsic conformal automorphism that is not visible in Euclidean space.

To this end, we will identify several different conformal representations of I-WP, using a 12-fold cyclically branched  cover over a thrice punctured sphere, a triple cover over the sphere that is branched at the vertices of a regular octahedron, and a Euclidean  approximation of I-WP.

We organize this paper as follows:

\begin{enumerate}
\item 
Section 2 outlines the history of the Steßmann surface (the conjugate of I-WP). 
\item In section 3 we represent I-WP as a 3-fold branched cover over the octahedron. 
\item
In section 4, we construct a specific 12-fold branched cover over the thrice punctured sphere and a triply periodic polyhedral surface, both conformally equivalent to I-WP.
\item Section 5 uses translation structures to find a basis of holomorphic 1-forms of I-WP, find its Weierstrass representation, and prove our main result about its associate family. 
\end{enumerate}

\section{Steßmann's Surface}

In his 1867 monograph \cite{schw1} {\em Bestimmung einer speciellen Minimalfläche,} Hermann Amandus Schwarz gives equations in terms of elliptic integrals for   several triply periodic minimal surfaces that solve the Plateau problem for certain spatial quadrilaterals. These quadrilaterals have the property that rotations about their edges generate a discrete group.

In 1891, Arthur Moritz Schoenflies  showed that there are six  such quadrilaterals (up to similarity) but didn't provide equations for the Plateau solutions.

\begin{figure}[h] 
   \centering
   \includegraphics[width=2.5in]{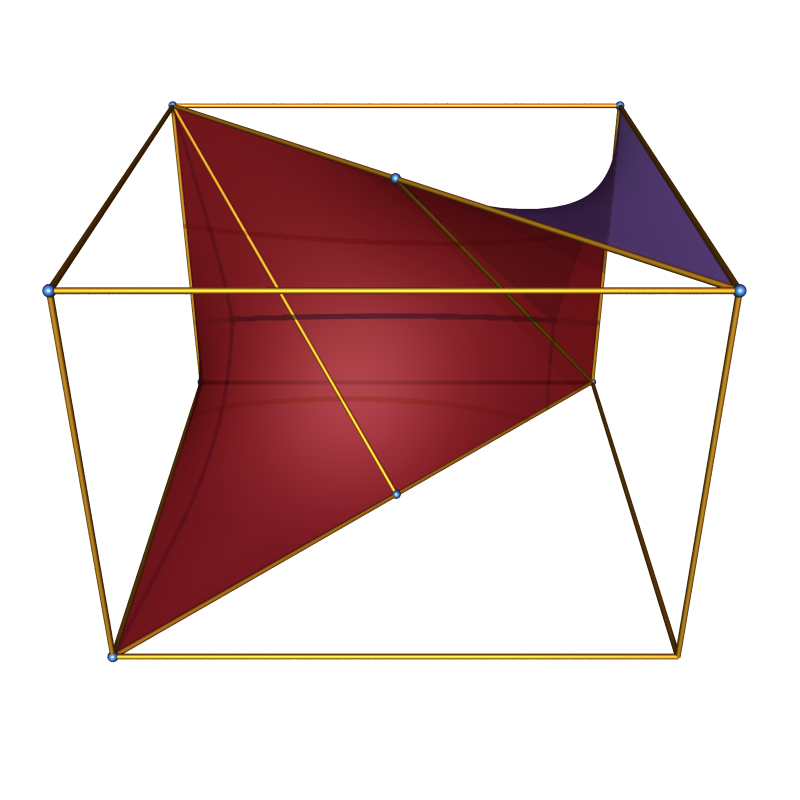} 
   \includegraphics[width=2.5in]{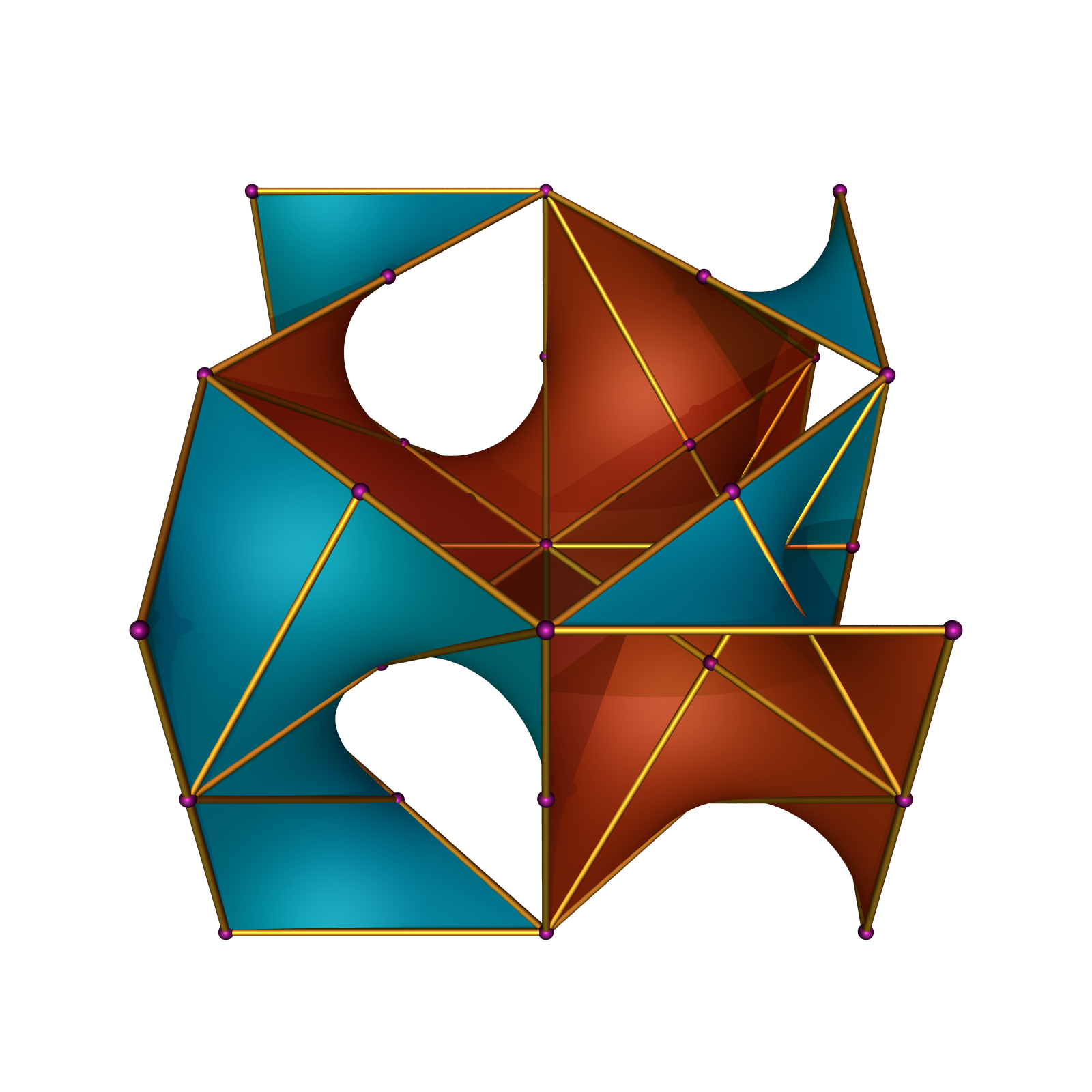} 
   \caption{Steßmann's Surface}
   \label{fig:stessmann}
\end{figure}

This had to wait until 1934 when  Berthold Steßmann (\cite{stess34}), a student of Carl Ludwig Siegel, completed this for the remaining cases in the spirit of Schwarz. One of them (case VI in his list) he treats  in full detail.

Its contour is shown in figure \ref{fig:stessmann}. The left image shows  three copies of this contour that fit together to form a spatial hexagon which can be described as follows: Take a rectangular box of dimension $1\times 1\times \sqrt2/2$. Then the contour above consists of two (non-parallel) diagonals of the top and bottom face, two vertical edges of the box, and two horizontal edges that lie diametrically across.

Now add the two line segments that connect a midpoint of a diagonal to a vertex of the box as shown in the figure. These two line segments divide the hexagon into three quadrilaterals, the Schoenflies quadrilaterals.

The dimensions of the box are chosen such that these quadrilaterals are congruent by $180^\circ$ rotations about the newly added lines.

\begin{figure}[h] 
   \centering
   \includegraphics[width=3.5in]{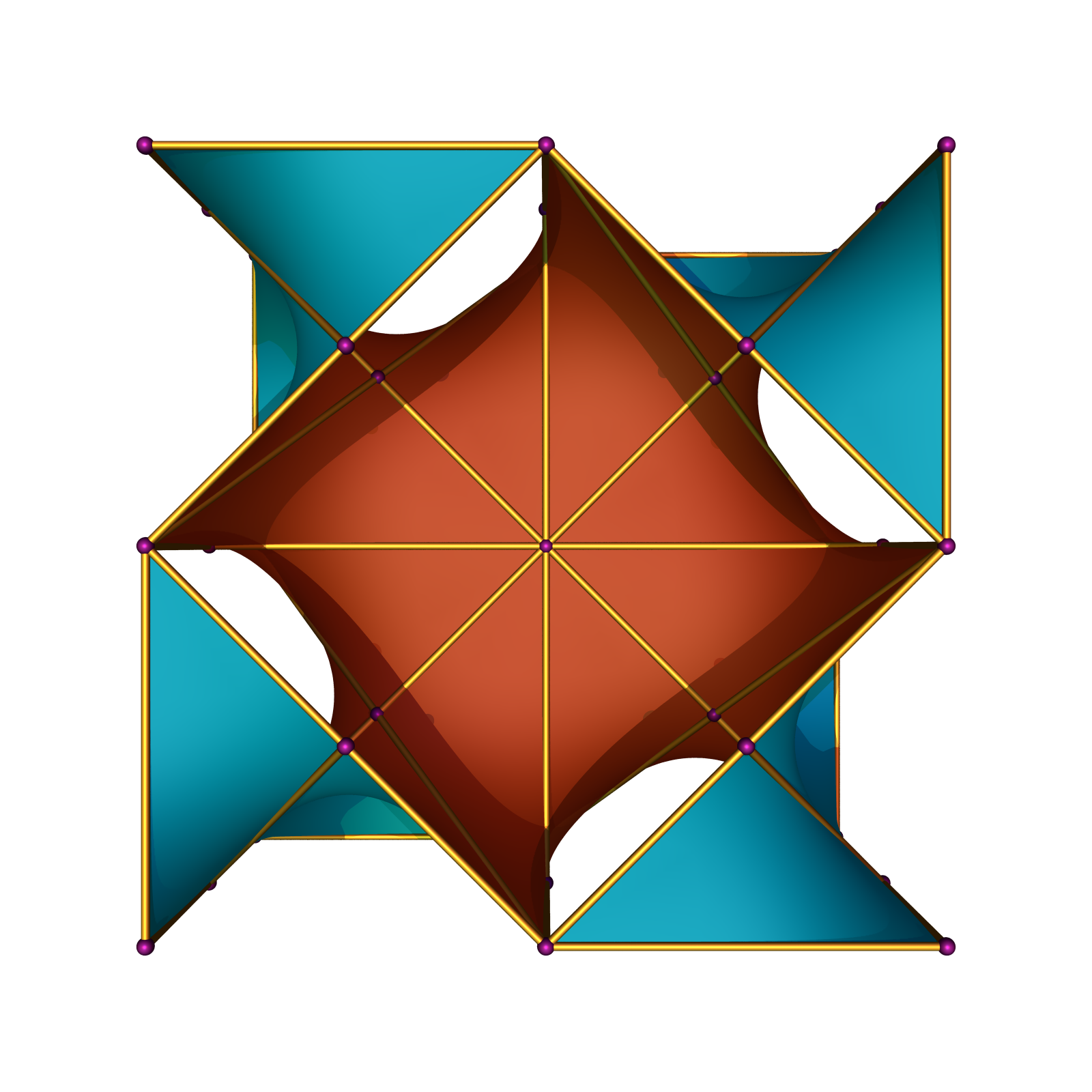} 
   \caption{Steßmann's Surface --- top view}
   \label{fig:stessmann2}
\end{figure}

Extending the surface further produces the appealing triply periodic surface of figure \ref{fig:stessmann2}. Unfortunately, this surface will not stay embedded; you see this at the corners where three pairwise orthogonal edges meet. 

The internet knows little about Berthold Steßmann. There is a short biographical note in \cite{tobies}, p. 330, telling us that he was born on August 4, 1906 in Hallenberg, Germany. He completed his studies in Göttingen and Frankfurt to become a high school teacher in 1933. Then, a year later, he received his PhD about periodic minimal surfaces, with Carl Ludwig Siegel as advisor. The same year,  Mathematische Zeitschrift published a paper of Steßmann, covering the same topic. He emigrated in 1935 to Palestine (\cite{glade}, p. 236).

\section{Schoen's I-WP Surface} 

\begin{figure}[h] 
   \centering
   \includegraphics[width=2.5in]{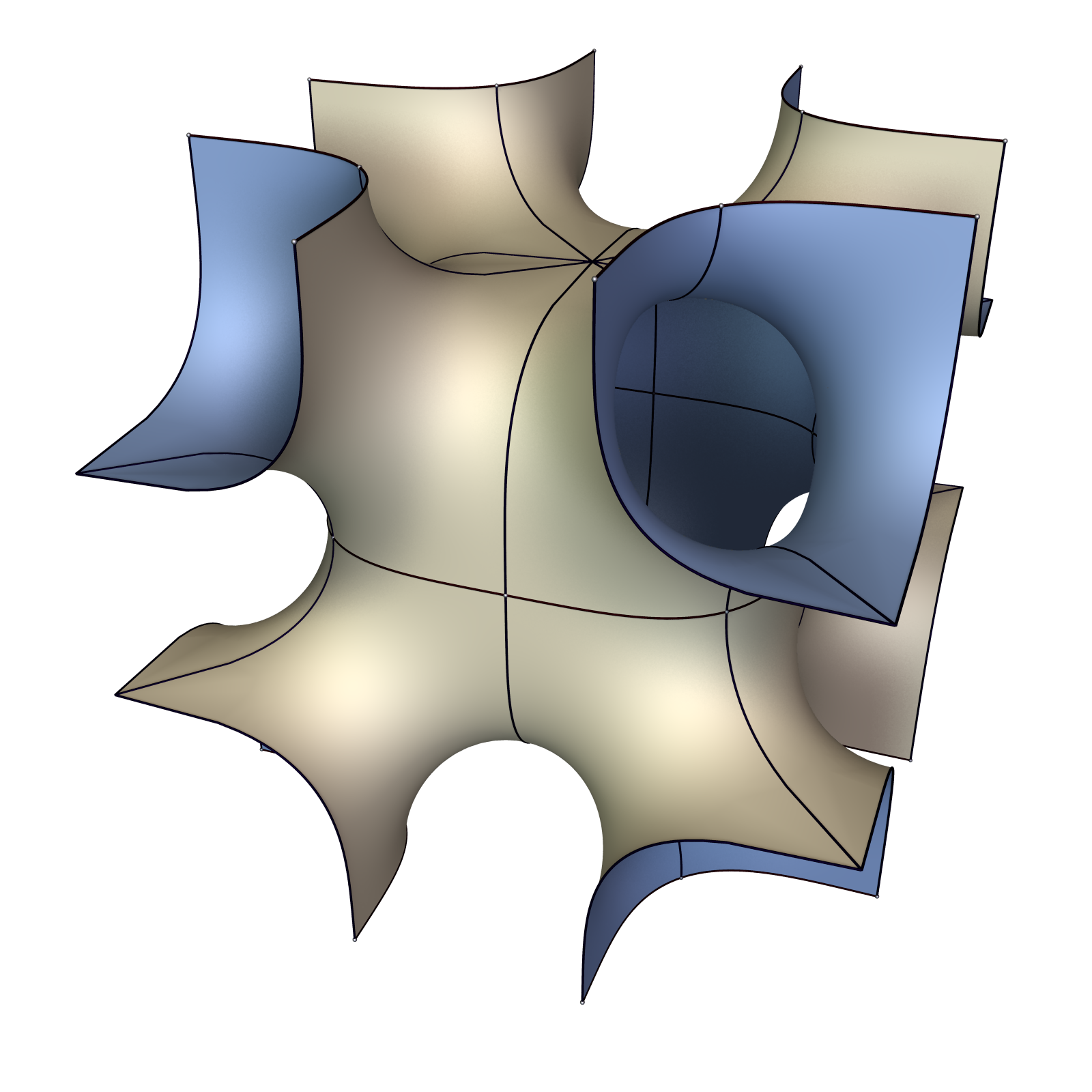} 
   \includegraphics[width=2.5in]{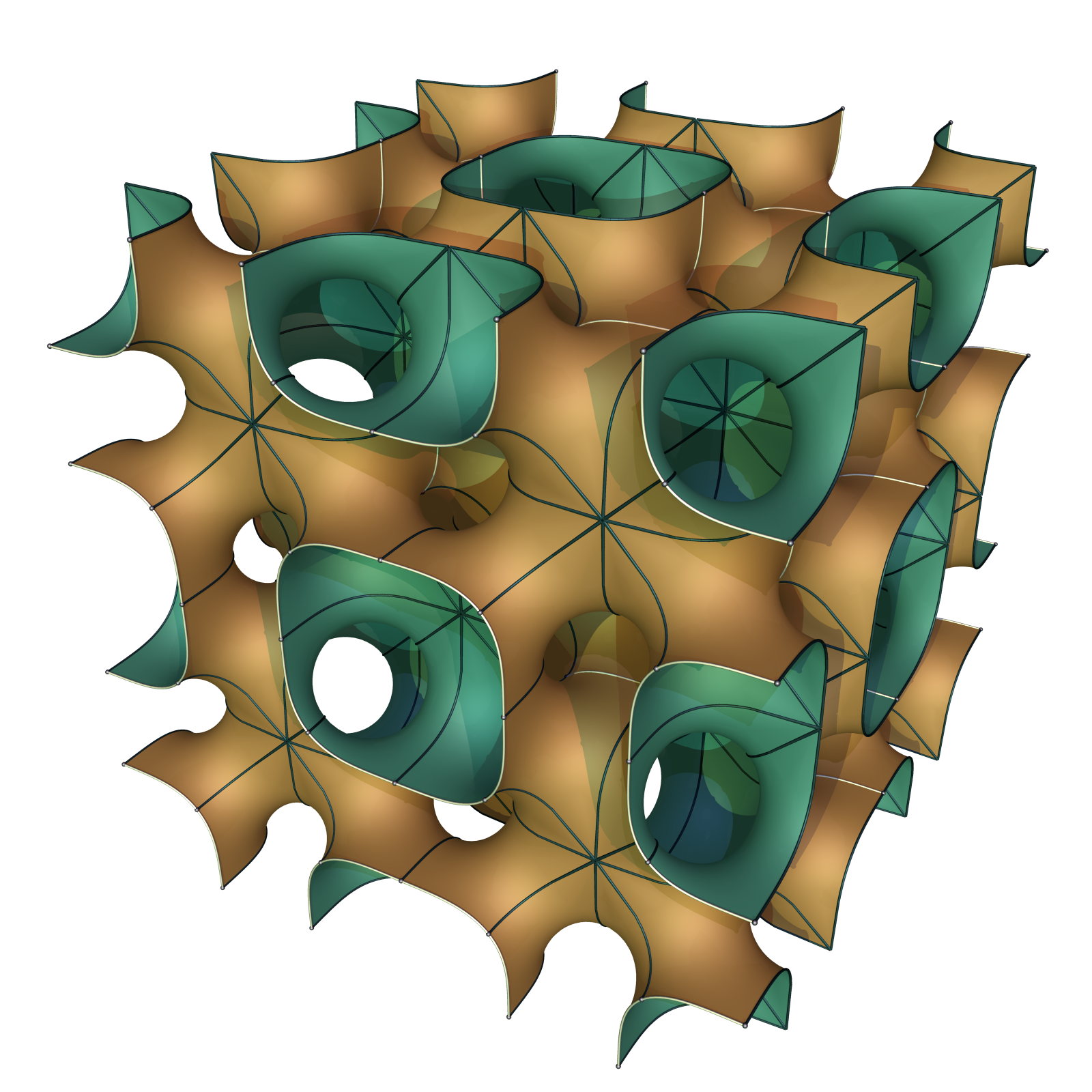} 
   \caption{The I-WP Surface}
   \label{fig:IWP}
\end{figure}

Surprisingly, the conjugate minimal surface of Steßmann's surface is embedded:
Alan Schoen’s I-WP surface from 1970 \cite{sch1} is a triply periodic minimal surface of genus 4. One can think of it as a sphere extending handles towards the vertices of a cube, see figure \ref{fig:IWP}.

The name ``I-WP" describes the skeletal graph of the two complements of the surface, see figure \ref{fig:skeleton}. The ``I" stands for the body-centered I-Bravais lattice, 
while ``WP" stands for {\em wrapped package}, indicating the appearance of the skeletal graph of the surface near a vertex, where four sticks are bundled together.

After identifying opposite faces of a cubical cell (see the left image of figure \ref{fig:IWP}) the surface has genus 7, but half of such a cell already constitutes a translational fundamental domain under the body-centered I-Bravais lattice, bringing the genus down to 4.

A first detailed description of the Weierstrass data appears in \cite{ka5}. Other descriptions of I-WP in terms of Weierstrass data have been given by  \cite{Lidin-IWP}  and \cite{fuwe1}.

Let's denote the underlying Riemann surface of the quotient of I-WP by its translational symmetries by $S$. Recall that the stereographic projection of the Gauss map $G$ is a meromorphic function on $S$.

\begin{theorem}
For $S$, the Gauss map $G$ is a meromorphic function of degree $3$ and represents $S$ as a 3-fold  cover of the sphere branched over the vertices of a regular octahedron.
\end{theorem}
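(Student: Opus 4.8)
\emph{Proof plan.} I would build the argument on two facts already recorded above: the quotient $S$ has genus $4$, and the orientation-preserving symmetries of the cube — a group isomorphic to $S_4$ — normalize the translation lattice and hence act on $S$ by conformal automorphisms. The first thing to pin down is the degree, via the total curvature. Since I-WP is minimal, the Gauss map is conformal up to orientation, so the pullback of the area form of the sphere is $K\,dA$ and $\int_S K\,dA=-4\pi\deg G$; comparing with Gauss--Bonnet, $\int_S K\,dA=2\pi\chi(S)=2\pi(2-2\cdot 4)=-12\pi$, yields $\deg G=3$. Riemann--Hurwitz for $G\colon S\to\C P^1$ then reads $2-2\cdot 4=3\cdot 2-\sum_p(e_p-1)$, so the total ramification of $G$ is exactly $12$; identifying the branch locus is now a matter of seeing where those $12$ units of ramification can sit.

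Here I would use the symmetry. For a cube-rotation $\rho$ preserving I-WP the Euclidean Gauss map satisfies $N\circ\rho=\rho\circ N$, so on $S$ one has $G\circ\rho=R_\rho\circ G$, where $R_\rho\in\mathrm{PSL}_2(\C)$ is the M\"obius transformation induced by $\rho$ on the sphere. Hence the (finite) set of branch values of $G$ is invariant under the octahedral rotation group acting on $\C P^1$, so it is a union of orbits: the generic orbits of size $24$, the $12$ edge-midpoint directions (point stabilizer of order $2$), the $8$ cube-vertex directions (stabilizer $C_3$), and the $6$ cube-face-center directions, i.e. the vertices of the regular octahedron (stabilizer $C_4$), and that is the complete list since point stabilizers of this action are cyclic of order at most $4$. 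Because each branch value carries ramification $\ge 1$ and the total is only $12$, a $24$-point orbit is impossible, and an $8$-point orbit would contribute a positive multiple of $8$, which cannot be completed to $12$ by the remaining admissible orbits.

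The one delicate case is a $12$-point orbit of edge midpoints, which would have to carry exactly one unit of ramification at each point, i.e.\ fibre type $2+1$. I would rule this out by a local equivariance argument: the $180^\circ$ rotation $\tau$ about the axis through an edge midpoint $m$ induces a nontrivial conformal involution of $S$ (its fixed locus in $\R^3/\Lambda$ is a union of lines, so it cannot act trivially), it fixes $m$ and preserves ramification indices, hence fixes the unique ramified preimage $q$ over $m$; after linearizing, $\tau$ is $t\mapsto -t$ near $q$ while $R_\tau$ is $w\mapsto -w$ near $m$, so equivariance forces the Taylor series of $G$ at $q$ to be odd in $t$ — contradicting that its lowest order term is a $t^2$ term. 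What then remains is precisely the $6$-point orbit of octahedron vertices, and since the total ramification $12=6\cdot 2$ must all be accounted for there, $G$ is totally ramified ($e=3$) over each of the six points, which is exactly the assertion. As a consistency check I would verify that these six preimages are the points where I-WP crosses the coordinate axes: such a point is fixed by the order-$4$ rotation about its axis, has axial normal, hence lies over an octahedron vertex, and must be the totally ramified preimage because no unramified preimage of that value exists.

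I expect Steps~1 and~2 to be routine bookkeeping with Riemann--Hurwitz and the orbit structure of $S_4$ on the sphere. The real work, and the likely sticking point, is the local equivariance argument eliminating edge-midpoint branching: one must be careful that $\tau$ genuinely descends to a nontrivial automorphism of $S$, that the linearized forms of $\tau$ at $q$ and of $R_\tau$ at $m$ are as claimed (in particular that the rotation number of a $\pi$-rotation at either fixed pole is $-1$), and that ramification index is transported correctly by the equivariance. Once that lemma is in hand, the theorem drops out.
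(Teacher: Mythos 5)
Your proposal is correct, but it reaches the branch locus by a genuinely different route than the paper. The paper quotes the theorem that $\deg G=g-1=3$ for a triply periodic minimal surface (your total-curvature computation via Gauss--Bonnet is precisely the proof of that theorem, so there is no real difference in Step 1) and then argues \emph{constructively}: each order-$4$ rotation about a coordinate axis fixes exactly two points of $S$, these are asserted to be totally ramified points of $G$ whose values are the six octahedron vertices, and Riemann--Hurwitz shows the resulting ramification $6\cdot(3-1)=12$ exhausts the budget, so $G$ is otherwise unbranched. You run the argument in the opposite direction: Riemann--Hurwitz first gives the budget of $12$, the branch value set must be a union of orbits of the octahedral rotation group carrying equal ramification over each point of an orbit, counting kills the $24$- and $8$-point orbits, and your local equivariance lemma at a hypothetical ramified preimage of an edge midpoint kills the $12$-point orbit, leaving only the $6$-point orbit with total ramification forced to be $2$ (hence $e=3$) over each vertex. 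Both arguments ultimately rest on one local rotation-number computation --- the paper's at the axis fixed points (where it is asserted rather than carried out), yours at the edge midpoints --- and yours is sound as sketched, provided you record that every Euclidean symmetry of I-WP preserves the normal field (the two complementary labyrinths are not congruent, so no symmetry can swap sides), which is what makes the induced maps on $S$ holomorphic and justifies $G\circ\rho=R_\rho\circ G$. What the paper's route buys is brevity and an explicit identification of the branch points as the axis crossings of the surface; what yours buys is independence from that geometric input --- you never need to know in advance where the surface meets the axes or what the local behavior of $G$ is there, since the orbit structure of the octahedral group does that work for you.
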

\begin{proof}
For a triply periodic minimal surface  of genus $g$, the Gauss map $G$ is a meromorphic function of degree $g-1$ (see \cite{me4}, Theorem 3.1), so for $S$, $G$ has degree 3. 

A $90^\circ$ rotation about a coordinate axis is an automorphism of $S$ of degree 4 with two fixed points which are branch points of index 3 each. This gives us six branch points of $G$ with total index 18 and branch values corresponding to the vertices of an octahedron. By the Riemann-Hurwitz formula, any genus 4 Riemann surface that is a threefold branched cover over a sphere has total branching index 18. Therefore $G$ is otherwise unbranched and  realizes $S$ as a 3-fold cover  branched over the vertices of a regular octahedron.
\end{proof}

The embedding of I-WP in Euclidean space gives no  obvious reason why this covering could  also be the quotient map corresponding to an order 3 automorphism. In fact, the only order 3 automorphisms of the cubical symmetry group are rotations about the space diagonals of the cube, and these act without fixed points on I-WP.

The preimages of the faces of the octahedron become a 12-valent map on I-WP consisting of 24 triangles. We will see in the next section that this map is considerably more regular than expected.

\section{$S_{147}$: A 12-fold branched cover of the thrice punctured sphere}

In this section we will show that I-WP is conformally equivalent to a cyclic 12-fold branched cover $S_{147}$ over the thrice punctured sphere. The reason for this surprising fact is that the order 4 rotations about the coordinate axes are in fact third powers of conformal automorphisms. We begin by giving a concrete construction of this branched cover.

Take a thrice punctured sphere with punctures at points $p_1$, $p_2$, $p_3$. Pick an arbitrary fourth point $p_0$ and make slits from $p_0$ to $p_j$. Denote this slit sphere by $\Sigma$. Now take 12 copies of $\Sigma$ and denote them by $\Sigma_i$, $i=0,\ldots, 11$. Let $d_1=1$, $d_2=4$, $d_3=7$. Identify the left hand side  of the slit from $p_0$ to $p_j$ (as seen from $p_0$) in copy $\Sigma_i$ with the right hand side of the same slit in copy $\Sigma_k$ where $k\equiv i +d_j \pmod {12}$. In other words: Think of the 12 copies of $\Sigma$ being stacked as a tower. When one crosses a slit $p_0p_j$ from the left to the right, one reemerges $d_j$ stories above.

The result is a Riemann surface $S_{147}$, the indices referring to the values of $d_j$.

\begin{theorem}
$S_{147}$ is a compact Riemann surface of genus 4, given by the equation $y^{12} = (x-p_1)^1(x-p_2)^4(x-p_3)^7$. 
\end{theorem}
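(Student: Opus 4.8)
\emph{Proof proposal.} The plan is to recognize the cut-and-paste surface $S_{147}$ as a connected cyclic $\mathbb{Z}/12$-cover of the $x$-sphere and then to match it with the normalization of the affine curve $C\colon y^{12}=(x-p_1)(x-p_2)^4(x-p_3)^7$. First I would check that the base sphere with the three slits $[p_0,p_1]$, $[p_0,p_2]$, $[p_0,p_3]$ removed is simply connected: the union of the slits is a tree (three arcs meeting only at $p_0$), so its complement in $S^2$ is an open disk. Hence the projection $S_{147}\to\mathbb{P}^1$ trivializes over this region, with the twelve copies $\Sigma_0,\dots,\Sigma_{11}$ as its sheets, and the index shift $i\mapsto i+1$ extends to a biholomorphic automorphism $\sigma$ of all of $S_{147}$, since the gluing rule $k\equiv i+d_j$ is $\sigma$-equivariant. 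Thus $\sigma$ generates a $\mathbb{Z}/12$-action with quotient $\mathbb{P}^1$.

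Next I would read off the monodromy. The local monodromy around $p_j$ is $\sigma^{d_j}$, while the monodromy around $p_0$ (and hence around $\infty$) is $\sigma^{d_1+d_2+d_3}=\sigma^{12}=\mathrm{id}$ because $1+4+7=12$; this identity is precisely the compatibility condition that makes the construction well defined, and $\gcd(1,4,7,12)=1$ shows that $S_{147}$ is connected. Now I would invoke the standard description of cyclic covers of $\mathbb{P}^1$: a connected $\mathbb{Z}/n$-cover whose deck generator acts on fibres by $\zeta_n$-multiplication and has local monodromy $\sigma^{d_j}$ at $p_j$ (with $\sum d_j\equiv 0 \bmod n$) is biholomorphic to the normalization of $\{y^n=\prod_j(x-p_j)^{d_j}\}$, the isomorphism carrying $\sigma$ to $(x,y)\mapsto(x,\zeta_n y)$. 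Choosing the branch cuts of $y=\prod_j(x-p_j)^{d_j/12}$ along the same slits $[p_0,p_j]$ and labelling sheet $i$ by the determination $\zeta_{12}^{\,i}$ times a fixed one identifies this normalization, slit by slit, with $S_{147}$. The only care needed is to align the orientation convention (``left of the slit, seen from $p_0$'' $\leftrightarrow$ ``$d_j$ stories up'') with the side across which $y$ picks up the factor $\zeta_{12}^{\,d_j}$; the opposite convention would produce instead the curve with exponents $(12-d_1,12-d_2,12-d_3)=(11,8,5)$, which the substitution $y\mapsto(x-p_1)(x-p_2)(x-p_3)/y$ identifies with $C$, so the stated equation holds in either case.

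Finally, the smooth projective model of $C$ (its normalization) is automatically compact, and $S_{147}$, once the fibres over $p_1,p_2,p_3$ are filled in using the local models $y^{12}=(x-p_j)^{d_j}\cdot(\text{unit})$, is exactly this model. Over $p_1$ and $p_3$ it is totally ramified (one point, ramification index $12$); over $p_2$ it has $\gcd(4,12)=4$ points of ramification index $3$; and over $\infty$ it is unramified, since the degree $1+4+7=12$ of the right-hand side is a multiple of $12$. Riemann--Hurwitz then gives $2-2g=12\cdot 2-\big[(12-1)+4(3-1)+(12-1)\big]=24-30=-6$, so $g=4$.

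\textbf{The main obstacle} I expect to be the middle step: making the identification of the purely combinatorial gluing data with the monodromy of the algebraic curve precise enough — in particular verifying that the slit complement is simply connected so the cover trivializes there, and matching the labelling and orientation conventions so that ``crossing a slit moves one $d_j$ stories up'' corresponds exactly to the deck transformation $y\mapsto\zeta_{12}^{\,d_j}y$. The consistency check $1+4+7\equiv 0$ and the connectedness check $\gcd(1,4,7,12)=1$ are quick, and the genus computation is routine once the equation is in hand.
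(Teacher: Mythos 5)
Your proposal is correct and follows essentially the same route as the paper: identify the cut-and-paste surface with the cyclic cover defined by $y^{12}=(x-p_1)(x-p_2)^4(x-p_3)^7$ via the monodromy data $d_j$, then compute the genus from the ramification over $p_1,p_2,p_3$ (your Riemann--Hurwitz count $24-\bigl(11+8+11\bigr)=-6$ is the same computation as the paper's Euler-characteristic count $24-36+6=-6$ for the lifted triangulation, since both reduce to counting $\gcd(d_j,12)$ preimages of each branch point). You supply details the paper leaves implicit --- the triviality of the monodromy around $p_0$ because $1+4+7\equiv 0 \pmod{12}$, connectedness from $\gcd(1,4,7,12)=1$, and the orientation convention --- but these are elaborations, not a different argument.
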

\begin{proof}
 The equation defines a Riemann surface with the same branching behavior as described. Its genus can be computed using Euler's formula by representing the sphere as a doubled triangle. Its three vertices, three edges and two faces lift to a triangle map of $S_{147}$ consisting of 24 triangles, 36 edges and $1+ 4 +1=6$ vertices coming from the preimages of the branch points $p_1$, $p_2$ and $p_3$ respectively. This implies that the Euler characteristic of $S_{147}$ is $-6$ and its genus  is 4.
\end{proof}

Because a thrice punctured sphere has only one conformal structure, it is easy to find other conformally correct geometric realizations of $S_{147}$. 
We begin with its hyperbolic structure which we will  identify with a polyhedral surface in Theorem \ref{thm:ident}.

To this end, we double a hyperbolic triangle with angles $\pi/12, \pi/6,  \pi/12$ to obtain a thrice punctured sphere with a hyperbolic structure.  We employ this sphere for the branched covering construction above. The choice of the  angles implies that the resulting surface carries a smooth hyperbolic structure, because at each puncture, the angles of all incident triangles add up to $360^\circ$. 
Developing this hyperbolic structure on $S_{147}$  into the hyperbolic plane results in a 24-gon, representing a fundamental domain for $S_{147}$, see figure \ref{fig:124}. We have placed the vertex $p_1$ at the center of the disk model. In  this figure the edge identifications are given by numbers, and a closed geodesic is indicated in blue.
 
\begin{figure}[h] 
   \centering
   \includegraphics[width=3in]{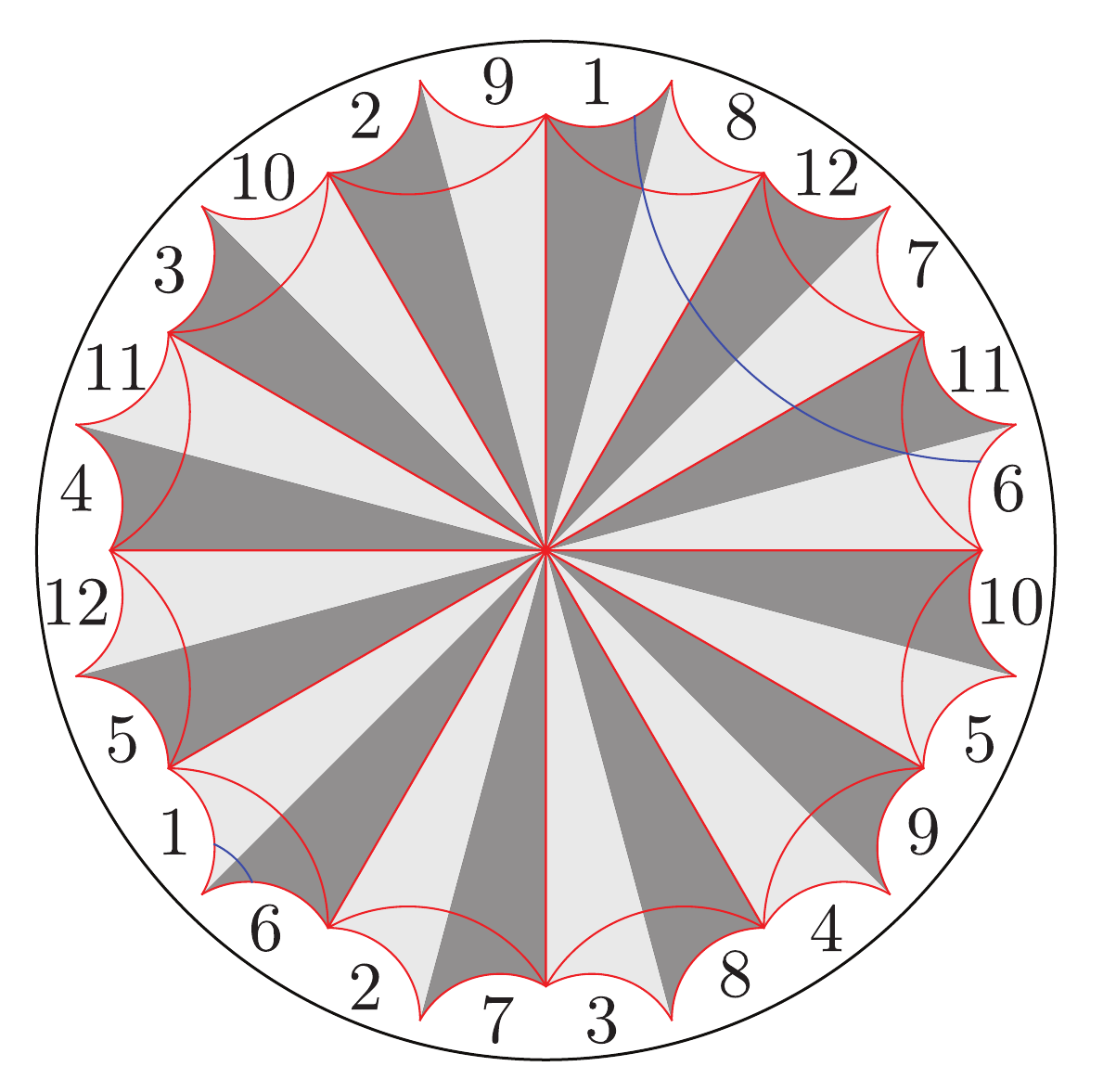} 
   \caption{Hyperbolic Fundamental Domain of $S_{147}$}
   \label{fig:124}
\end{figure}

Given the construction, $S_{147}$ has an automorphism of order 12, visible in figure \ref{fig:124} as a $30^circ$ rotation about the center of the disk.
We will now show that it is in fact much more symmetric.

A second hyperbolic triangle tiling of $S_{147}$ can be obtained as follows: Take two triangles that are adjacent along the edge connecting the two $\pi/12$ vertices and subdivide this quadrilateral along the diagonal connecting the two $\pi/6$ vertices into two equilateral $\pi/12$-triangles. These triangles are indicated in figure \ref{fig:124} as red. 
We will denote $S_{147}$ together with this new tiling by $S'_{147}$.
We will show that this tiling is in fact Platonic. To this end, we will digress and introduce a polyhedral surface that is combinatorially and conformally equivalent to $S'_{147}$.

In {\em The Symmetries of Things}  (\cite{cbg}, page 342) by John Horton Conway, Heidi Burgiel and Chaim Goodman-Strauss there are a description and an image of a pseudo-Platonic polyhedron called {\em Octahedral $3^{12}$}, which we will denote by $\tilde S_{3^{12}}$. 

One way to construct this polyhedron is by placing an octahedral annulus (in other words a regular antiprism over a triangle without the top and bottom triangles)  inside a cube so that its vertices lie on the edges of the cube. 
The precise dimensions of the octahedral annulus are chosen so that its vertices (represented by yellow dots) divide the cube edges in the proportion 1:3 which guarantees that the octahedron becomes regular.
Replicating it by reflecting at the faces of the cube generates a triply periodic polyhedron $\tilde S_{3^{12}}$. On the left of figure \ref{fig:312} we show 8 copies of the octahedral annulus inside a $2\times2\times2$ cube. This constitutes twice a translational fundamental domain, the translations given by the diagonal vectors of the cube. 

The (translational) compact quotient surface $ S_{3^{12}}$ is tiled by $f=24$ equilateral triangles with 12 of them meeting at each vertex. This shows it has $v=6$ vertices and $e=36$ edges so that by Euler's theorem its Euler characteristic is $-6$ and the genus is 4.

\begin{figure}[h] 
   \centering
   \includegraphics[width=2.5in]{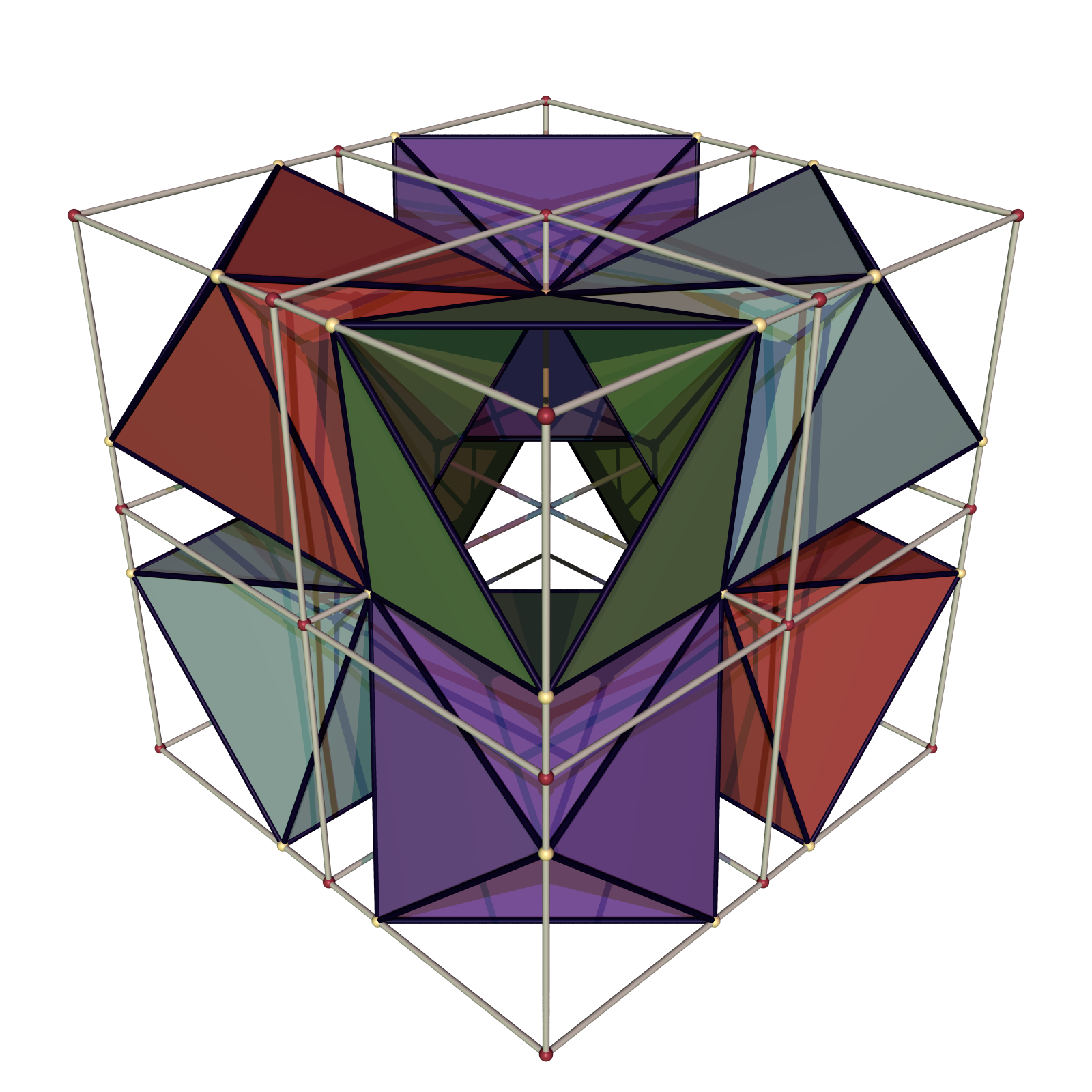} 
   \includegraphics[width=2.5in]{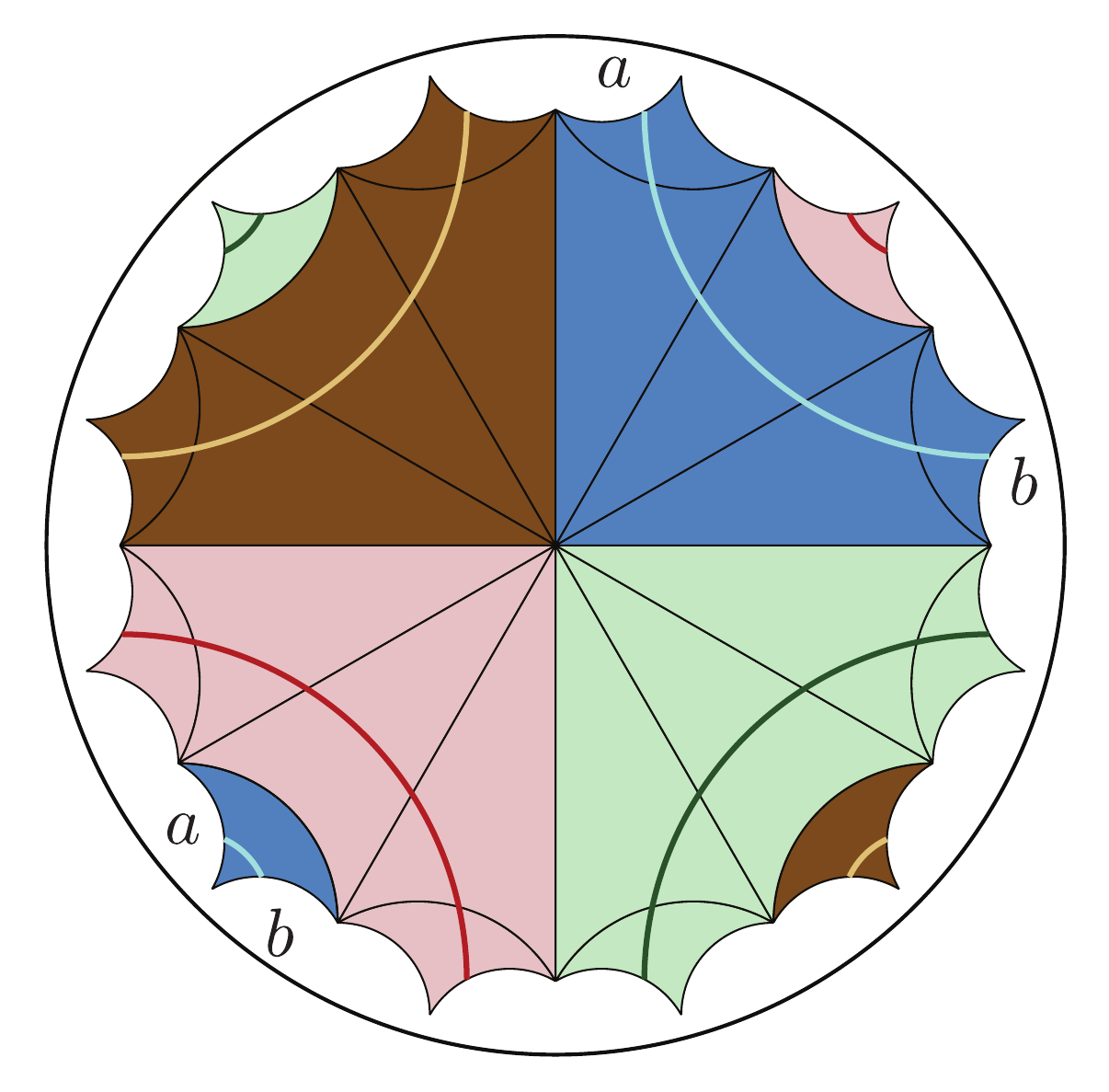} 
   \caption{The $\tilde S_{3^{12}}$ periodic polyhedral surface and its hyperbolic fundamental domain}
   \label{fig:312}
\end{figure}

Recall that any polyhedron carries a natural conformal structure: To see this, note that a neighborhood of a vertex is intrinsically isometric to an abstract Euclidean cone with a  cone angle given by the sum of the angles of the adjacent polygons (and possibly larger than 2$\pi$). Such a cone can be obtained by identifying the two edge rays of a sector  of angle $\alpha$ in the universal cover of the  punctured complex plane $\C-\{0\}$. A conformal chart for this cone is given by the map $z\mapsto z^{2\pi/\alpha}$.

\begin{theorem}\label{thm:combi} (\cite{damithesis}) The triangle maps of $S_{3^{12}}$ and $S'_{147}$ are isomorphic maps.
\end{theorem}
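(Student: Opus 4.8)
The plan is to exhibit both triangle maps as the essentially unique smooth $\Z/12$-Galois cover of one and the same sphere-orbifold, and to read off the isomorphism from this common description.

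Each of the two maps is an equivelar map of type $\{3,12\}$ on the closed orientable surface of genus $4$: twenty-four triangular faces, thirty-six edges, six vertices, twelve triangles around each vertex, and $144$ flags. Hence each is a quotient of the universal $\{3,12\}$ tessellation of $\mathbb{H}^2$ by an index-$144$ subgroup of the $(2,3,12)$ triangle group, and is reflexible precisely when that subgroup is normal, in which case the automorphism group has order $144$. By construction, $S'_{147}$ is a smooth $\Z/12$-cover: the deck group $\langle\zeta\rangle\cong\Z/12$ of the cyclic covering $S_{147}\to(\text{thrice punctured sphere})$ has quotient the sphere with cone points exactly at the three $\zeta$-orbits of vertices with nontrivial stabilizer --- two of cone order $12$ (the single preimages of $p_1$ and $p_3$, fixed by all of $\langle\zeta\rangle$) and one of cone order $3$ (the common image of the four preimages of $p_2$, with stabilizer $\langle\zeta^4\rangle$) --- and, since $\zeta$ permutes the red triangles, the red triangle map descends to the two-triangle map on this orbifold (the doubled base triangle).

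The first task is to produce the same picture for $S_{3^{12}}$. Since octahedral $3^{12}$ is flag-transitive --- this is what makes it the pseudo-Platonic polyhedron of \cite{cbg}, and it can also be checked directly from the antiprism-in-a-cube combinatorics --- its automorphism group, of order $144$, contains an element $\bar\zeta$ of order $12$ cyclically rotating the twelve triangles at a vertex. Because $\bar\zeta$ is a map automorphism it can fix only vertices, and a short Riemann--Hurwitz computation forces its fixed-point data to be two fixed vertices together with one $\Z/3$-orbit of four vertices, and the quotient $S_{3^{12}}/\langle\bar\zeta\rangle$ to be a sphere with cone orders $12,12,3$; the $\bar\zeta$-invariant triangle map descends to a two-triangle map on it, again the doubled triangle. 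The second task is uniqueness: a smooth $\Z/12$-Galois cover of the sphere-orbifold with cone orders $(12,12,3)$ is classified by a surjection of its orbifold fundamental group onto $\Z/12$ sending the three cone loops to elements of orders $12,12,3$ whose product vanishes, and up to interchanging the two order-$12$ cone points and applying an automorphism of $\Z/12$ the only such datum is $(1,7,4)$ --- that is, the curve $y^{12}=(x-p_1)(x-p_2)^4(x-p_3)^7$, which is $S_{147}$. Hence there is an isomorphism $S_{3^{12}}\cong S_{147}$ intertwining the two $\Z/12$-actions and the two maps to the common orbifold; as both triangle maps are pullbacks of the doubled-triangle map, this isomorphism carries one onto the other, which is the assertion.

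I expect the crux to be the production of $\bar\zeta$: it is realized by no Euclidean motion of the polyhedron (a crystallographic group has no rotation of order $12$), so one must either invoke the flag-transitivity of octahedral $3^{12}$ recorded in \cite{cbg} or establish it by direct combinatorial inspection. An alternative that avoids the orbifold machinery altogether is to write out the two $144$-flag systems explicitly --- using the $\zeta$-symmetry to reduce $S'_{147}$ to a handful of flag-orbits and the cube symmetry to organize $S_{3^{12}}$ --- and to match them by hand.
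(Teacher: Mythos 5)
Your route is genuinely different from the paper's. The paper proves this theorem by brute-force comparison: it develops $S_{3^{12}}$ into the hyperbolic plane (replacing each Euclidean triangle by a $30^\circ$ triangle), obtains a $24$-gon whose edge identifications are forced by the closing of the antiprism waist polygons, and observes that this identification pattern coincides with the one for $S'_{147}$ in figure \ref{fig:124}. You instead try to exhibit both maps as the unique smooth $\Z/12$-Galois cover of the $(12,12,3)$ sphere-orbifold. Your uniqueness step is correct and clean: with one cone generator normalized to $1$, the condition that the second have order $12$, the third order $3$, and the sum vanish forces $(1,7,4)$ up to the allowed symmetries, recovering exactly the defining equation of $S_{147}$. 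If the rest were in place, this would be a more conceptual argument that simultaneously explains \emph{why} the two maps must agree.

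However, there are two genuine gaps, both on the $S_{3^{12}}$ side. First, the existence of the order-$12$ map automorphism $\bar\zeta$ is the crux of the whole matter, and you do not establish it: it is realized by no Euclidean symmetry of the polyhedron, and in the paper's logical order the Platonic property of $S_{3^{12}}$ (Theorem 4.3) is \emph{deduced from} the present theorem, not available before it. Deferring to flag-transitivity ``recorded in \cite{cbg}'' imports a fact at least as strong as the statement to be proved, and your fallback of ``direct combinatorial inspection'' is precisely the paper's proof. Second, even granting $\bar\zeta$, Riemann--Hurwitz does not force the signature $(12,12,3)$: the signature $(12,6,4)$ also gives orbifold Euler characteristic $-1/2$ and is realizable by a smooth cyclic $\Z/12$ cover of genus $4$ (take cone generators $7,2,3$ in $\Z/12$), and order-$2$ and order-$3$ stabilizers of powers of $\bar\zeta$ could a priori sit at edge midpoints or face centers rather than vertices. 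Ruling these out requires actually examining how $\bar\zeta$ permutes the six vertices, which again amounts to the combinatorial work you are trying to avoid. So the proposal is an attractive reorganization, but as written it assumes its hardest ingredient.
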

\begin{proof}
We  develop  $S_{3^{12}}$  into the hyperbolic plane by  (abstractly)  replacing each Euclidean triangle with a hyperbolic $30^\circ$ triangle as shown in the right figure of  \ref{fig:312}. To make this easier to follow, we have colored the triangles corresponding to four individual anti-prisms  with the same color. The  waist polygons of the anti-prisms become the shaded geodesics. That they close on each antiprism after passing through six triangles enforces the identification pattern of this 24-gon, indicated by letters. We see that the identification patterns of this new 24-gon and the one from figure \ref{fig:124} are the same. Thus both triangle maps are combinatorially identical.
\end{proof}

As a corollary, we prove:

\begin{theorem}(\cite{tomthesis})
The triangle maps of $S_{3^{12}}$ and  $S'_{147}$ are Platonic. The group of its orientation preserving automorphisms is of order 72, and the full automorphism group has order 144.
\end{theorem}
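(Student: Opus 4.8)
The plan is to reduce the statement to a flag count and then supply enough automorphisms using the ``hidden'' order $12$ map isolated in this section. Recall that a map on an orientable surface is \emph{Platonic} (a regular map) exactly when its automorphism group acts transitively on flags --- incident triples $(\text{vertex},\text{edge},\text{face})$ --- and that an automorphism fixing a flag must be the identity, so the flag action, and likewise the action on darts (oriented edges), is always free. Our triangle map has $2E=72$ darts and $4E=144$ flags, so a priori $\lvert\mathrm{Aut}^+\rvert\le 72$ and $\lvert\mathrm{Aut}\rvert\le 144$. It therefore suffices to exhibit $72$ orientation-preserving automorphisms, which by the bound forces $\lvert\mathrm{Aut}^+\rvert=72$, together with one orientation-reversing automorphism, which then forces $\lvert\mathrm{Aut}\rvert=144$; and a free action of a group of order $144$ on $144$ flags is automatically transitive, giving Platonicity.

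To produce the automorphisms I would work on I-WP itself, using Theorem~\ref{thm:combi} and the identifications of the preceding sections to view the triangle map as the tessellation of $S$ by the $G$-preimages of the octahedron faces. Since $G$ intertwines each Euclidean symmetry of I-WP with an octahedral symmetry of the Gauss sphere, the full octahedral group $O_h$ (order $48$) acts on $S$ preserving this tessellation; the action is faithful because a rigid motion fixing the affinely spanning surface $S$ is trivial, and the rotation subgroup $O\cong S_4$ acts by orientation-preserving automorphisms while each reflection acts by an orientation-reversing one. In addition there is the order $12$ automorphism $c$ of this section, visible in Figure~\ref{fig:124} as the $30^\circ$ rotation and with $c^{3}$ the Euclidean $90^\circ$ rotation about a coordinate axis; being an isometric rotation of the hyperbolic disc it is orientation-preserving, and it preserves the geodesic triangulation.

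Now I would pin down $\lvert\langle O,c\rangle\rvert$. This subgroup of $\mathrm{Aut}^+$ has order at most $72$ and is a multiple of $\lvert O\rvert=24$, hence is $24$, $48$ or $72$. It is not $24$: that would give $c\in O$, impossible since $c$ has order $12$ while $S_4$ has no element of order exceeding $4$. It is not $48$: then $O$ would have index $2$, hence be normal, forcing $c^{2}\in O$, again impossible since $c^{2}$ has order $6$. Hence $\lvert\langle O,c\rangle\rvert=72$, so $\lvert\mathrm{Aut}^+\rvert=72$. Adjoining a reflection of I-WP gives $\lvert\mathrm{Aut}\rvert\ge 144$, hence $\lvert\mathrm{Aut}\rvert=144$, and by the first paragraph the map is Platonic.

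I expect the real subtlety to lie in the two compatibility claims used above. First, $c$ does \emph{not} descend through the degree $3$ Gauss map --- no order $12$ M\"obius transformation fixes the six octahedron vertices --- so one must argue separately that $c$ permutes the cells of the Platonic triangulation; the natural way is to work inside the intrinsic hyperbolic structure of $S'_{147}$, where $c$ is literally a rotation carrying the geodesic triangulation by congruent regular triangles to itself. Second, one must know a Euclidean reflection $\tau$ of I-WP restricts to an orientation-reversing self-map of $S$; this follows because $\mathrm{Fix}(\tau)\cap S$ is a curve rather than open in $S$ (I-WP contains no planar piece), while an orientation-preserving self-map fixing a curve pointwise would fix a neighbourhood of it. (Alternatively, in the model $y^{12}=(x-p_1)^1(x-p_2)^4(x-p_3)^7$ with the $p_j$ real, complex conjugation furnishes the orientation-reversing automorphism directly.)
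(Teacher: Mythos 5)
Your overall skeleton is the same as the paper's: exhibit $72$ orientation\-/preserving automorphisms, cap the count by a flag/dart argument ($|\mathrm{Aut}^+|\le 3\cdot 24=72$, $|\mathrm{Aut}|\le 144$), and read off Platonicity from a free action of maximal order. Within that skeleton your group-theoretic step is actually tighter than the paper's: where the paper simply asserts that the $24$ cubical symmetries together with the order $12$ rotation give ``at least $72$'' automorphisms, you pin down $|\langle O,c\rangle|=72$ by excluding $24$ (no element of order $12$ in $S_4$) and $48$ (index $2$ would make $O$ normal and force $c^2\in O$, impossible as $c^2$ has order $6$). That is a genuine improvement in rigor, and your ``free action of order $144$ on $144$ flags is transitive'' is a cleaner route to Platonicity than the paper's flag-transitivity remark.

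The gap is in where you source the order $24$ subgroup. You propose to take it from the Euclidean symmetries of I-WP, ``using Theorem~\ref{thm:combi} and the identifications of the preceding sections to view the triangle map as the tessellation of $S$ by the $G$-preimages of the octahedron faces.'' No such identification is available at this point: Theorem~\ref{thm:combi} only matches the triangle maps of $S_{3^{12}}$ and $S'_{147}$ with each other, and the identification of either of these with the Gauss-map triangulation of I-WP is the \emph{last} theorem of this section, whose proof (via Theorem~\ref{thm:ident} and the Schwarz-reflection extension argument) itself invokes Platonicity. As written, your argument is therefore circular. The repair is immediate and is exactly what the paper does: take the $24$ orientation-preserving automorphisms from the cubical symmetries of the polyhedral surface $\tilde S_{3^{12}}$ itself, which are manifest from its construction by reflecting the octahedral annulus in the faces of a cube; these visibly permute the $24$ equilateral triangles, and the $90^\circ$ coordinate rotation is identified with $c^3$ via Theorem~\ref{thm:combi}. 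With that substitution (and with the orientation-reversing element taken as a reflective symmetry of $\tilde S_{3^{12}}$ fixing an edge, or as complex conjugation on $y^{12}=x(x-1)^4$, either of which works), your proof goes through.
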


\begin{proof}
Using the cubical symmetries of $S_{3^{12}}$, we find 24 orientation preserving automorphisms of $S_{3^{12}}$. A rotation about a suitable coordinate axis becomes a $90^\circ$ rotation about the center of the developed version of $S_{3^{12}}$.
In $S'_{147}$, this rotation is the third power of a $30^\circ$ rotation.
  Thus we have at least 72 orientation preserving automorphisms that preserve both triangle maps. On the other hand, the (orientation preserving) automorphism group of any surface that preserves a triangle map with 24 triangles can at most be of order 72. It follows that the maps are Platonic, as we can map any pair of (vertex, adjacent edge) to any other such pair. 
  
Either map has reflectional symmetries that fix edges, so the order of the full automorphism group is at least 144. Again, any (full) automorphism group of a surface that preserves a triangle map with 24 triangles can at most be of order 144.

This proves all our claims.

\end{proof}
 
\begin{theorem}\label{thm:ident} (\cite{damithesis}) $S_{3^{12}}$ and $S'_{147}$ are conformally equivalent under a map that preserves the respective triangle maps.
\end{theorem}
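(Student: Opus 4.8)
The plan is to show that the conformal structure of either surface is completely determined by the \emph{combinatorics} of its Platonic triangle map, so that the combinatorial isomorphism of Theorem~\ref{thm:combi} automatically upgrades to a conformal one. The mechanism is to pass to the common quotient orbifold and invoke its rigidity.

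First I would record what the preceding theorem gives: the shared triangle map is Platonic with automorphism group $\Gamma$ of order $144$, which is exactly the number of flags (mutually incident vertex--edge--face triples), so $\Gamma$ acts simply transitively on flags; moreover $\Gamma$ is realized by conformal and anticonformal automorphisms --- on $S_{3^{12}}$ by the isometries of its polyhedral metric, on $S'_{147}$ by the automorphisms exhibited there --- each preserving the respective triangle map. Next I would compute $X/\Gamma$ for $X$ either surface: since $12$ triangles meet at every vertex, $2$ along every edge, and $3$ around every face, the stabilizers of a vertex, an edge midpoint, and a face center are dihedral of orders $24$, $4$, $6$; hence $X/\Gamma$ is a closed topological disk with mirror boundary and three corners of interior angles $\pi/12$, $\pi/2$, $\pi/3$. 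The crucial point is then that such an orbifold triangle carries a \emph{unique} conformal structure: as a closed disk with a conformal structure and three marked boundary points it is, by the Riemann mapping theorem together with Schwarz reflection, conformally the hyperbolic $(2,3,12)$ triangle --- regardless of whether it came from the Euclidean cone metric of $S_{3^{12}}$ or from a hyperbolic structure on $S'_{147}$.

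It then remains to observe that a conformal orbifold covering $X \to X/\Gamma$ with deck group $\Gamma$ is classified, up to $\Gamma$-equivariant conformal isomorphism, by the induced surjection from $\pi_1^{\mathrm{orb}}(X/\Gamma)$ --- the $(2,3,12)$ hyperbolic triangle group $\Delta$ generated by the reflections in the sides of that triangle --- onto $\Gamma$, equivalently by its kernel $K$, so that $X \cong \mathbb{H}/K$. Choosing a base flag (a flag triangle of the barycentric subdivision) in each surface, the three generating reflections of $\Delta$ permute the flags exactly as dictated by the triangle map, so the surjection $\Delta \twoheadrightarrow \Gamma$ is read off from the map alone. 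By Theorem~\ref{thm:combi} the two triangle maps are isomorphic; transporting the base flag through that isomorphism identifies the two surjections $\Delta \twoheadrightarrow \Gamma$, hence their kernels $K$, hence $S_{3^{12}} \cong \mathbb{H}/K \cong S'_{147}$. Since this identification descends to the identity on the common quotient orbifold, it carries flag triangles to flag triangles and so preserves the triangle maps, which is what the theorem asks for.

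The step I expect to be the main obstacle is the rigidity claim in the second paragraph --- that an orbifold triangle has a unique conformal structure, and in particular that the polyhedral conformal structure on $S_{3^{12}}$ genuinely coincides with the hyperbolically tiled one. This is exactly the place where one must not conflate metric with conformal data, and where the real work (a Schwarz triangle function / Riemann mapping argument, or equivalently the uniformization theorem for orbifolds) sits; everything after that is bookkeeping with the flag action and an application of Theorem~\ref{thm:combi}.
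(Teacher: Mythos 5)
Your proposal is correct, but it takes a genuinely different route from the paper's. The paper argues constructively: it takes the conformal map from a Euclidean equilateral triangle to a hyperbolic equilateral $\pi/12$-triangle, equivariant under the $120^\circ$ rotations, applies it simultaneously to all $24$ pairs of corresponding triangles, and uses the Platonic property together with the combinatorial isomorphism of Theorem~\ref{thm:combi} to extend across edges by Schwarz reflection; conformality at the vertices is then a removable-singularity statement in the cone coordinates. You instead pass to the quotient by the full automorphism group $\Gamma$ of order $144$, identify that quotient as the rigid $(2,3,12)$ reflection triangle orbifold, and classify both surfaces as $\mathbb{H}/K$ for the kernel $K$ of the surjection $\Delta(2,3,12)\twoheadrightarrow\Gamma$ read off from the flag action --- in effect proving the stronger, standard fact that a surface carrying a Platonic map of type $\{3,12\}$ with $24$ faces has its conformal structure completely determined by the combinatorics of the map. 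The paper's route buys explicitness and minimal machinery (one Riemann map of a triangle plus Schwarz reflection); yours buys generality and a clean structural statement, at the cost of invoking orbifold uniformization and the covering-space classification. Two points you should make explicit to close your argument: (i) that all $144$ combinatorial automorphisms really act conformally or anticonformally on \emph{each} surface --- this holds because the faces are congruent equilateral triangles in the respective Euclidean-cone and hyperbolic metrics, so every combinatorial automorphism is realized by an isometry; and (ii) that the polyhedral conformal structure of $S_{3^{12}}$ at its cone points (defined via $z\mapsto z^{2\pi/\alpha}$) agrees with the structure induced from the orbifold cover --- this is the same removable-singularity issue the paper disposes of at the vertices, and you correctly identify it as the one place where metric and conformal data must not be conflated.
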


\begin{proof}
We have shown that the Euclidean triangle map of $S_{3^{12}}$ and the hyperbolic triangle map of  $S'_{147}$ are combinatorially equivalent and Platonic. We define a map from $S_{3^{12}}$ to $S'_{147}$ by taking the conformal map from a regular Euclidean triangle to a  hyperbolic $\pi/12$ triangle that is equivariant with respect to the $120^\circ$ rotations of the triangles. We apply this map to any pair of triangles of $S_{3^{12}}$ and $S'_{147}$.  As both maps are Platonic, we can extend this map (at first locally) by using the Schwarz reflection principle. As both maps are combinatorially equivalent, the extension of this map becomes a well defined conformal map from $S_{3^{12}}$ to $S'_{147}$. That this map is also conformal at the vertices follows because both maps have  conformal coordinates at their vertices, using the  Riemann singularity theorem.
\end{proof}

The fact that we have an order 12 rotation about any vertex of $S_{3^{12}}$ allows us to study other quotients. Of particular importance is the following, because it provides us with a crucial link to the Gauss map of the I-WP surface:

\begin{corollary}
The $120^\circ$ rotation about a vertex of the platonic triangle tiling of $S_{3^{12}}$ has a regular octahedron as its quotient.
\end{corollary}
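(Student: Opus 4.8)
The plan is to present the $120^\circ$ rotation as an explicit order $3$ conformal automorphism $\sigma$ of $S_{3^{12}}$, determine its fixed points, compute the quotient surface by Riemann--Hurwitz, and then push the full automorphism group of $S_{3^{12}}$ down to the quotient in order to force its six branch points into the position of a regular octahedron. Concretely, I would set $\sigma=\rho^4$, where $\rho$ is the order $12$ rotation about a chosen vertex; then $\sigma$ is the $120^\circ$ rotation about that vertex and has order $3$.

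Working in the model $S_{147}$, in which $\rho$ is the deck transformation $(x,y)\mapsto(x,e^{2\pi i/12}y)$, the automorphism $\sigma$ is $(x,y)\mapsto(x,e^{2\pi i/3}y)$. Its fixed points are exactly the points with $y=0$, namely the six preimages of $p_1,p_2,p_3$ --- the six vertices of the Platonic triangle map --- while the twelve points over $x=\infty$ are permuted freely; a short local computation (at the two totally ramified vertices over $p_1$ and $p_3$, and at the four vertices over $p_2$) shows that $\sigma$ is linearized as $z\mapsto e^{2\pi i/3}z$ at each of these six fixed points, with the \emph{same} linearization at all six. Hence $q\colon S_{3^{12}}\to Q:=S_{3^{12}}/\langle\sigma\rangle$ is a cyclic triple cover branched precisely over the six vertices, each with ramification index $3$, and Riemann--Hurwitz, $2\cdot 4-2=3(2g(Q)-2)+6\cdot 2$, gives $g(Q)=0$: the quotient $Q$ is a sphere carrying six marked branch points.

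Next I would descend the symmetry. Because $\sigma$ has the same tangential linearization $e^{2\pi i/3}$ at all six of its fixed points, it is the unique order $3$ automorphism of $S_{3^{12}}$ that fixes all six vertices with that linearization; since every orientation preserving automorphism of $S_{3^{12}}$ is holomorphic and permutes the six vertices while preserving tangential rotation numbers, each such automorphism commutes with $\sigma$. Thus $\langle\sigma\rangle$ is normal (indeed central) in the orientation preserving automorphism group of $S_{3^{12}}$, which we showed above has order $72$, so the quotient group $\bar G$ of order $72/3=24$ acts faithfully by M\"obius transformations on $Q$. As $\bar G$ receives an injection from the octahedral rotation group of $S_{3^{12}}$ (no nontrivial cubical automorphism fixes all six vertices, hence none lies in $\langle\sigma\rangle$), and that group also has order $24$, we conclude $\bar G\cong S_4$; therefore $\bar G$ is conjugate to the standard octahedral subgroup of $\mathrm{PSL}_2(\C)$.

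Finally, the orientation preserving automorphism group of $S_{3^{12}}$ is vertex transitive --- the stabilizer of a vertex being its order $12$ rotation group --- so $\bar G$ acts transitively on the six branch points of $q$, which therefore form a single $\bar G$-orbit of size six. The only size-six orbit of the octahedral group on the sphere is the set of vertices of a regular octahedron, so the branch locus of $q$ is a regular octahedron; equivalently, the quotient of $S_{3^{12}}$ by the $120^\circ$ rotation is the regular octahedron, as claimed. The step I expect to be the main obstacle is establishing the normality of $\langle\sigma\rangle$, and specifically verifying that $\sigma$ has the identical tangential linearization at every one of its six fixed points --- this is exactly what prevents an automorphism from conjugating $\sigma$ to $\sigma^{-1}$; once that is in hand, the faithful descent of an order $24$ group to the sphere, the classification of finite M\"obius groups, and the uniqueness of the octahedral six-orbit complete the argument.
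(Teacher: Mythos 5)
Your proof is correct, but it is a genuinely different (and far more elaborate) argument than the paper's, whose entire proof is ``by inspection of the hyperbolic fundamental domain'': the $120^\circ$ rotation about the center of the $24$-gon identifies the $24$ triangles in orbits of three, and one simply reads off a quotient map with $8$ triangles, $12$ edges and $6$ vertices, i.e.\ the octahedral map. Your route instead works in the algebraic model, computes $\mathrm{Fix}(\sigma)$ for $\sigma=\rho^4:(x,y)\mapsto(x,e^{2\pi i/3}y)$, gets genus $0$ from Riemann--Hurwitz, and then descends the order-$72$ group to an order-$24$ M\"obius group acting transitively on the six branch points. All the key steps check out: the linearization of $\sigma$ is indeed $e^{2\pi i/3}$ at all six vertices (at the totally ramified point over $p_3$ one has $\rho: s\mapsto \zeta^7 s$, so $\sigma=\rho^4: s\mapsto\zeta^{28}s=\zeta^4 s$, matching the point over $p_1$), and the centrality of $\sigma$ follows because $g\sigma g^{-1}\sigma^{-1}$ then fixes a vertex with derivative $1$ and, being of finite order, must be the identity. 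You were also right to be careful in the M\"obius classification step: order $24$ alone does not force the octahedral group (cyclic and dihedral groups of order $24$ exist in $\mathrm{PSL}_2(\C)$), and your injection of the cubical $S_4$ into $\bar G$ is exactly what closes that loophole. What your approach buys is a genuinely \emph{conformal} conclusion --- the six branch points are M\"obius-equivalent to the octahedron's vertices, which is what is actually needed when this corollary is matched against the Gauss map --- at the cost of invoking the earlier theorems on the order-$72$ automorphism group and the Platonic property. Two cosmetic remarks: with the paper's normalization $p_3=\infty$ there are no ``twelve unramified points over $x=\infty$'' (you are implicitly using a model with all three $p_j$ finite, which is fine), and the statement concerns an arbitrary vertex, so one should add a word that the Platonic property makes all vertices equivalent, reducing to the vertex over $p_1$.
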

\begin{proof}
By inspection of the hyperbolic fundamental domain.
\end{proof}

Finally we are ready to prove:

\begin{theorem}
Consider I-WP together with the triangle map coming from the covering over the octahedron by its Gauss map. Then I-WP is conformally equivalent to both $S_{3^{12}}$ and $S_{147}$ under a map that preserves the respective triangle maps.
\end{theorem}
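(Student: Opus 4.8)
The plan is to identify the Riemann surface $S$ underlying I-WP with an \emph{explicit} cyclic triple cover of the sphere, and then to recognize that this is exactly the cover sitting inside $S_{3^{12}}\cong S_{147}$. The first observation is that the Gauss‑map covering is automatically cyclic. By the preceding theorem, $G\colon S\to\mathcal O$ presents $S$ as a degree $3$ cover of the octahedral sphere $\mathcal O$, branched exactly over the six octahedron vertices and with full ramification over each. I would then note that the monodromy homomorphism $\pi_1(\mathcal O\setminus\{\text{6 vertices}\})\to S_3$ has transitive image and sends each of the six loops around a vertex to a $3$‑cycle; since every $3$‑cycle of $S_3$ lies in $A_3\cong\Z/3$, the image is contained in $\Z/3$, hence equals it. So $G$ is a regular cyclic triple cover — this is the source of the ``hidden order $3$'', forced by the ramification data even though no order $3$ automorphism is visible in Euclidean space.

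Next I would pin down \emph{which} cyclic triple cover it is, using the cubical symmetry. Cyclic triple covers of the sphere branched with full ramification over a fixed six‑point set $V$ are classified by the tuple of local monodromies $(\epsilon_v)_{v\in V}\in(\Z/3\setminus\{0\})^V$ with $\sum_v\epsilon_v=0$, taken modulo the automorphism $\epsilon\mapsto-\epsilon$ of $\Z/3$. The orientation preserving cubical symmetries of I-WP form the octahedral rotation group $\Gamma\cong S_4$; they fix $V\subset\mathcal O$, act transitively on $V$ (as does the subgroup $A_4$), and, being symmetries of I-WP, they lift to $S$. Equivariance then forces $\epsilon_{g\cdot v}=\alpha_g\,\epsilon_v$ for a homomorphism $g\mapsto\alpha_g\in\{\pm1\}$; restricting to the (still vertex‑transitive) subgroup on which $\alpha_g=1$ makes all $\epsilon_v$ equal. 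Hence $G\colon S\to\mathcal O$ is \emph{the} unique $\Gamma$‑invariant cyclic triple cover of $\mathcal O$ branched over its vertices: placing the vertices at $0,\infty,\pm1,\pm i$, it is the curve $y^{3}=x^{5}-x$, with $G$ the $x$‑coordinate and the $24$ preimages of the octahedron faces as its triangle map.

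Finally I would recognize $S_{3^{12}}$ and $S_{147}$ as the same cover. By the corollary above, the $120^\circ$ rotation $\rho$ of the Platonic tiling of $S_{3^{12}}$ (of order $3$) has the regular octahedron as quotient, so $\langle\rho\rangle$ presents $S_{3^{12}}$ as a cyclic triple cover of $\mathcal O=S_{3^{12}}/\langle\rho\rangle$ branched over the six fixed points of $\rho$ — the six octahedron vertices. Since $\rho$ acts freely on the $24$ Platonic triangles (a fixed triangle would contribute a seventh fixed point at its centre), those triangles are precisely the preimages of the $8$ octahedron faces; and the octahedral symmetry of the quotient comes, by the proof of the corollary, from cubical symmetries of $S_{3^{12}}$ that descend, so this cover is $\Gamma$‑invariant as well. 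By the uniqueness just established it is isomorphic, as a branched cover of $\mathcal O$, to $G\colon S\to\mathcal O$; a finite branched cover of the sphere carries a unique compatible complex structure, so this isomorphism of covers is a conformal isomorphism $S\cong S_{3^{12}}$ intertwining the covering maps, hence carrying the triangle map of $S$ to the Platonic triangle map of $S_{3^{12}}$. Composing with the conformal triangle‑map‑preserving identification $S_{3^{12}}\cong S'_{147}=S_{147}$ from Theorem~\ref{thm:ident} yields the claim.

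The hard part, I expect, will be the equivariance bookkeeping in the last two steps: checking that a lift of an orientation preserving cubical symmetry can twist the chosen generator of $\Z/3$ only by one global sign (so that vertex‑transitivity still equalizes the $\epsilon_v$), and that the cubical symmetries of $S_{3^{12}}$ genuinely descend to $\mathcal O$ (equivalently, normalize $\langle\rho\rangle$) — the latter being visible in the hyperbolic fundamental domain but worth spelling out, e.g.\ by noting that $\rho$ is, up to inverse, the only order $3$ automorphism acting freely on the $24$ triangles and fixing all six vertices. Everything else — the classification of cyclic triple covers with prescribed ramification, the Riemann–Hurwitz count, and the passage from ``isomorphic branched covers'' to ``conformally equivalent'' — is routine. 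An alternative endgame, closer to the proof of Theorem~\ref{thm:ident}: the first step together with the deck $\Z/3$ already shows the I-WP triangle map is Platonic with $72$ orientation preserving automorphisms, hence combinatorially isomorphic to that of $S_{3^{12}}$, after which one builds the conformal equivalence triangle by triangle from the $120^\circ$‑equivariant conformal map of a spherical octant triangle onto a regular $\pi/12$‑triangle, extended by Schwarz reflection, with conformality at the vertices supplied by the Riemann singularity theorem.
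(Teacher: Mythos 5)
Your argument is correct, but it reaches the conclusion by a genuinely different route from the paper. The paper's proof is combinatorial and concrete: it takes the twelve triangles forming the preimage of the upper hemisphere under the Gauss map (the ``minimal dodecagon''), reads off that the Gauss map sends the boundary vertices cyclically to $1,-i,-1,i$ with the branch value $\infty$ at the center, and then checks directly that these Gauss-map labels can be transferred to the vertices of the $24$-gon fundamental domain of $S_{3^{12}}$ consistently with its edge identifications; this exhibits the two branched covers of the octahedron as topologically equivalent by inspection. You instead prove an abstract rigidity statement: the monodromy of any degree $3$ cover fully ramified over six points lands in $A_3$, so the Gauss-map cover is automatically cyclic Galois (a fact the paper never states for I-WP itself, and which nicely explains the ``hidden'' order $3$ automorphism); cyclic triple covers with this ramification are classified by their local monodromy vector $(\epsilon_v)$ up to a global sign; and equivariance under the octahedral rotation group --- whose index-$\le 2$ subgroup $A_4$ is still vertex-transitive --- forces all $\epsilon_v$ equal, pinning down the cover uniquely as $y^3=x^5-x$. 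Applying the same uniqueness to the quotient $S_{3^{12}}\to S_{3^{12}}/\langle\rho\rangle$ then gives the identification without any figure-checking. What your approach buys is independence from the pictures and an intrinsic explanation of \emph{why} the two covers must agree (symmetry leaves no room for a different monodromy vector); what the paper's approach buys is brevity, given that the fundamental domains have already been drawn. The points you flag as needing bookkeeping (normality of $\langle\rho\rangle$ in the order-$72$ group, the single global sign $\alpha_g$) do go through as you sketch them. One caution about your proposed ``alternative endgame'': knowing that the I-WP triangle map is Platonic of type $\{3,12\}$ with $72$ orientation-preserving automorphisms does not by itself yield a combinatorial isomorphism with the $S_{3^{12}}$ map --- uniqueness of such a Platonic map on a genus $4$ surface would need a separate argument --- so your main line of proof (uniqueness of the equivariant cyclic cover) is the one to keep.
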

\begin{proof}
We have seen that both I-WP and $S_{3^{12}}$ are branched covers over an octahedron, the former via its Gauss map, the latter  via the quotient map  under an order 3 rotation. It remains to show that both coverings are topologically equivalent.

The left of figure \ref{fig:dodecagon} shows a portion of I-WP corresponding to the 12 triangles that are the preimage of the upper hemisphere under the Gauss map. At the center, the Gauss map is branched with value $\infty$, and  the 12 vertices are cyclically mapped  to $1, -i, -1, i$.

 \begin{figure}[h] 
   \centering
   \includegraphics[width=2.5in]{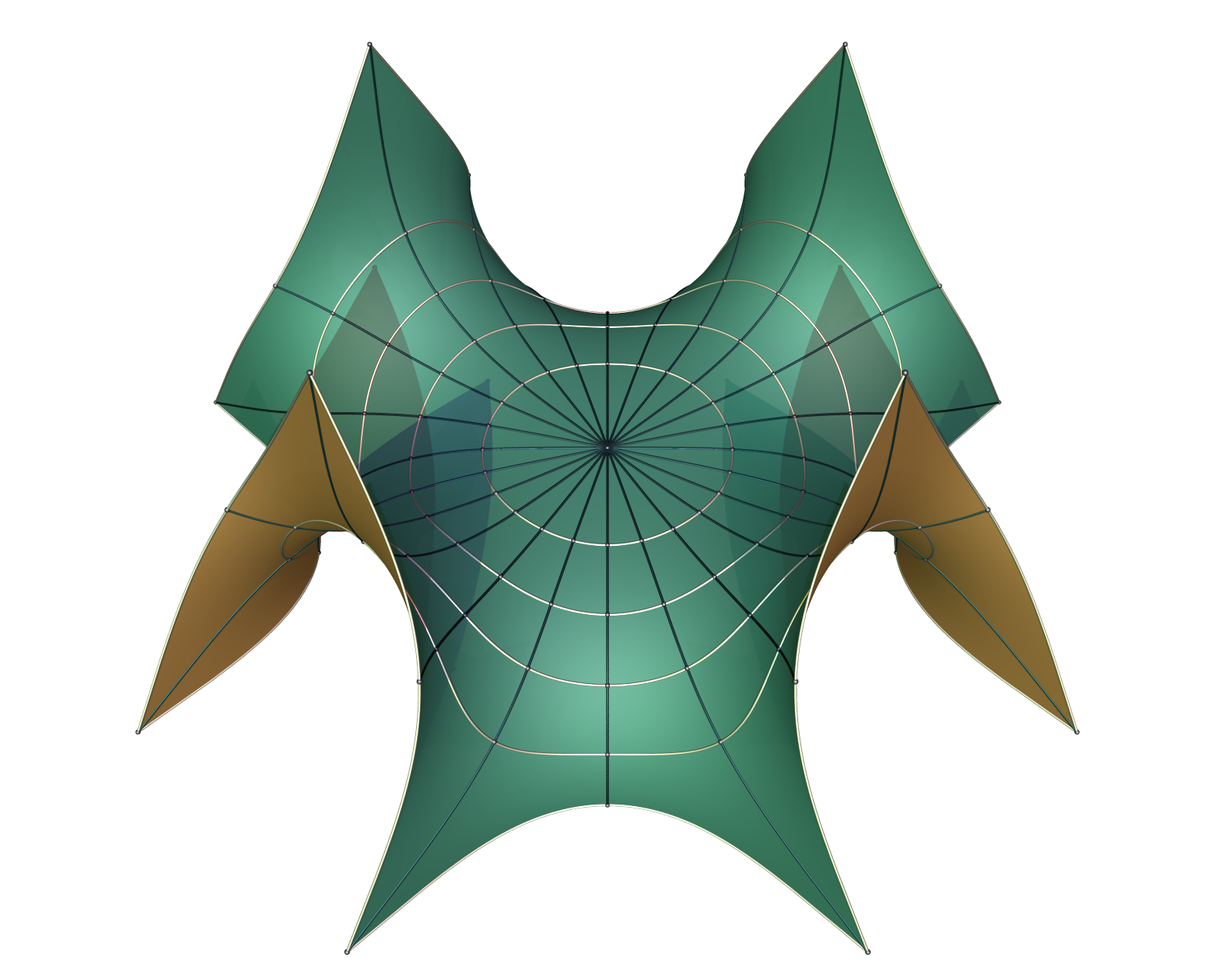} 
   \includegraphics[width=2.5in]{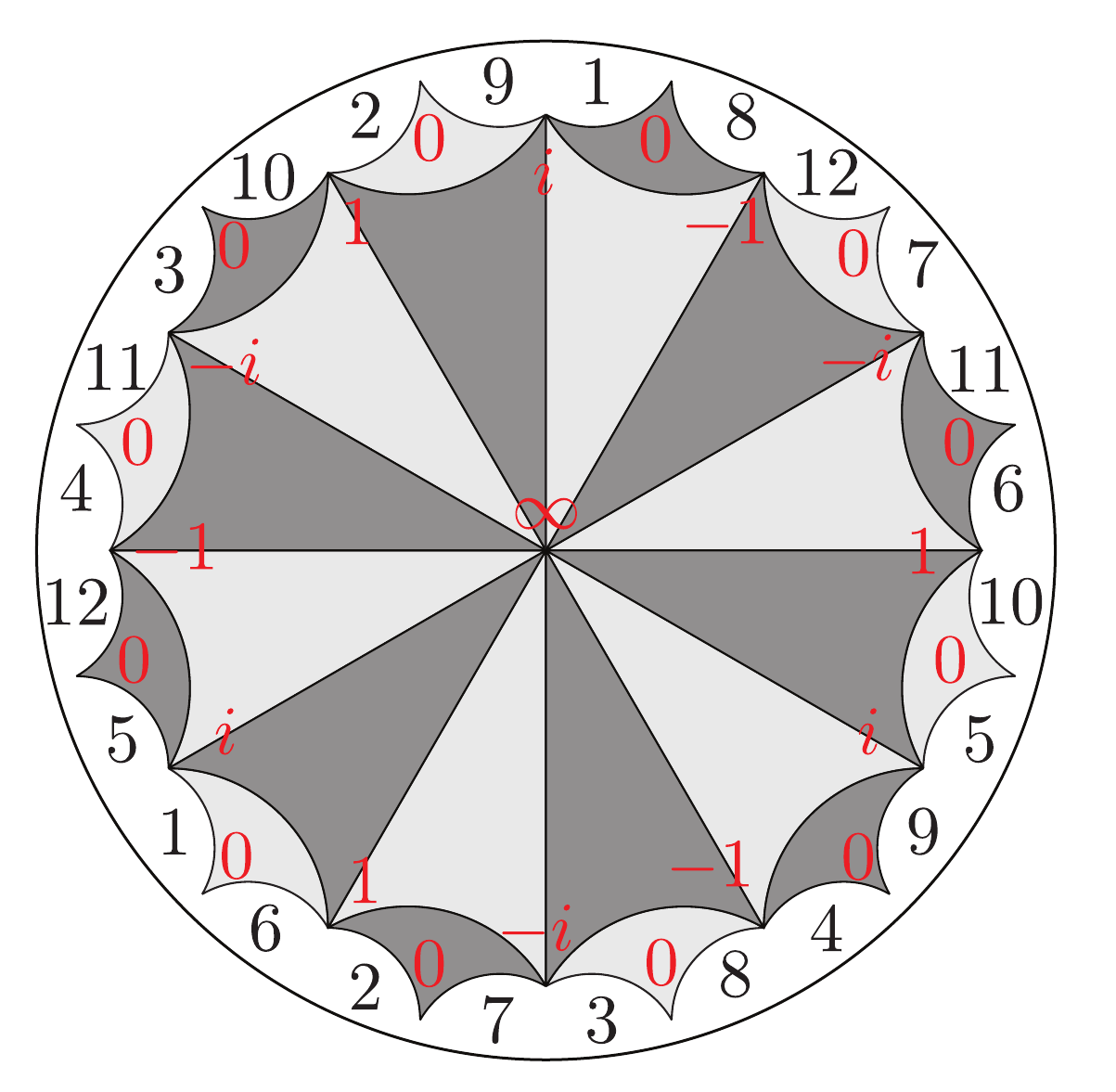} 
   \caption{Minimal Dodecagon}
   \label{fig:dodecagon}
\end{figure}

This allows us to assign Gauss map values to the vertices of the triangle map of $S_{3^{12}}$ (the right image in figure \ref{fig:dodecagon})  that are consistent with the identification pattern of $S_{3^{12}}$, thus showing the topological equivalence of both coverings.

\end{proof}

\section{Weierstrass Representation}

To write down a Weierstrass representation for I-WP, we will use $x$ as a coordinate on $S_{147}$, using $p_1=0$, $p_2=1$, $p_3=\infty$ with the defining equation $y^{12}=x(x-1)^4$.

\begin{theorem}
The 1-forms
\begin{alignat*}{2}
   \omega_1 = {}& x^{1/12-1}(x-1)^{4/12-1}\, dx =& \frac{y}{x(x-1)}\, dx\\
   \omega_2 = {}& x^{2/12-1}(x-1)^{8/12-1} \, dx=& \frac{y^2}{x(x-1)}\, dx\\
   \omega_3 = {}& x^{4/12-1}(x-1)^{4/12-1}\, dx =& \frac{y^4}{x(x-1)^2}\, dx\\
   \omega_4 = {}& x^{7/12-1}(x-1)^{4/12-1} \, dx=& \frac{y^7}{x(x-1)^3}\,dx\\
\end{alignat*}
form a basis of holomorphic 1-forms on $S$. 
\end{theorem}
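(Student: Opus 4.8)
The plan is to work throughout with the explicit algebraic model $y^{12}=x(x-1)^4$ for $S$ and to check two things: that each $1$-form $\omega_k$ is holomorphic, i.e.\ has no poles on $S$, and that the four forms are linearly independent over $\C$. Since $S$ has genus $4$, any four linearly independent holomorphic $1$-forms automatically span the full space of holomorphic $1$-forms, so these two checks finish the proof.

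Linear independence is immediate from the cyclic symmetry. The deck transformation $\tau$ of the degree-$12$ covering $x\colon S\to\mathbb{P}^1$ acts by $y\mapsto\zeta y$ with $\zeta=e^{2\pi i/12}$, while fixing $x$. Each $\omega_k$ equals $y^{m_k}\,dx$ multiplied by a function pulled back from the base, with $(m_1,m_2,m_3,m_4)=(1,2,4,7)$, so $\tau^{*}\omega_k=\zeta^{m_k}\omega_k$. The exponents $1,2,4,7$ are pairwise incongruent modulo $12$, hence the $\omega_k$ are eigenforms of $\tau^{*}$ for pairwise distinct eigenvalues and are therefore linearly independent.

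For holomorphy I would first record the local structure of $S$ over the three branch points. Over $x=0$ there is one point $P_0$ with ramification index $12$ and $\operatorname{ord}_{P_0}(y)=1$; over $x=1$ there are four points $P_1^{(1)},\dots,P_1^{(4)}$, each with ramification index $3$ and order $1$ along $y$; over $x=\infty$ there is one point $P_\infty$ with ramification index $12$ and $\operatorname{ord}_{P_\infty}(y)=-5$, since $x(x-1)^4$ has a pole of order $5$ there. From these data one reads off the divisors of $x$, $x-1$, $y$ and $dx$; for instance $\operatorname{div}(dx)=11P_0+2\sum_j P_1^{(j)}-13P_\infty$, whose degree is $2g-2=6$, a useful consistency check. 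Since $\omega_k$ equals $y^{m_k}\,dx$ divided by a suitable power of $x$ and of $x-1$, adding contributions point by point yields
\[
\operatorname{div}(\omega_1)=6P_\infty,\quad
\operatorname{div}(\omega_2)=P_0+\textstyle\sum_j P_1^{(j)}+P_\infty,\quad
\operatorname{div}(\omega_3)=3P_0+3P_\infty,\quad
\operatorname{div}(\omega_4)=6P_0,
\]
each of which is effective; hence all four forms are holomorphic.

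I do not expect a real obstacle here. The only step that needs care is the bookkeeping at $P_\infty$, where $x$, $x-1$ and $y$ all have negative order and must be combined with the correct multiplicities; the safest way to trust the outcome is to verify that each divisor produced has the expected degree ($0$ for a meromorphic function, $6$ for a holomorphic $1$-form). One could instead bypass the computation altogether by invoking the general formula for holomorphic differentials on a cyclic cover $y^N=\prod_i(x-a_i)^{b_i}$, but the direct verification is short and self-contained.
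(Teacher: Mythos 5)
Your proof is correct, and it takes the route the paper explicitly mentions and then declines to follow: the paper says the holomorphy ``can of course be checked using suitable local coordinates'' but instead builds Euclidean translation structures on $S_{147}$ (replacing the hyperbolic triangles by Euclidean ones with angles $(1,4,7)\frac{\pi}{12}$, $(2,8,2)\frac{\pi}{12}$, etc.) and reads off each divisor from the cone angles via the rule that a cone angle $2\pi k$ gives a zero of order $k-1$. Your direct computation on the cyclic cover $y^{12}=x(x-1)^4$ buys self-containedness and easy verifiability (the degree checks $\deg\operatorname{div}(dx)=2g-2=6$ are exactly the right safeguard), while the paper's approach buys geometric insight that is reused later in the paper (the flat pictures, the cone data, and the link to the Weierstrass representation). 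Your argument is also more complete on one point: the paper never addresses linear independence or spanning, whereas your eigenform argument for the order-$12$ deck transformation (distinct eigenvalues $\zeta^{1},\zeta^{2},\zeta^{4},\zeta^{7}$) plus $\dim H^0(S,\Omega^1)=g=4$ settles that the four forms are a basis, not merely holomorphic. One discrepancy worth flagging: your divisors $\operatorname{div}(\omega_1)=6P_\infty$ and $\operatorname{div}(\omega_4)=6P_0$ are the transposes of the paper's $(\omega_1)=6p_1$ and $(\omega_4)=6p_3$ under the stated convention $p_1=0$, $p_3=\infty$; your version is the one consistent with that convention (e.g.\ $\omega_4=x^{-5/12}(x-1)^{-2/3}\,dx\sim t^{6}\,dt$ at the point over $x=0$), so the paper's list appears to interchange $p_1$ and $p_3$ there. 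This does not affect the conclusion, since both versions are effective of degree $6$.
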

\begin{proof}
This can of course be checked using suitable local coordinates. Instead we will make this evident by geometrically constructing the translation structures of these 1-forms.

Let's replace the hyperbolic $(1, 2, 1)\frac\pi{12}$-triangle of $S_{147}$ conformally by a Euclidean $(1,4,7)\frac\pi{12}$ triangle in the hyperbolic fundamental domain. The resulting 24-gon is shown in figure \ref{fig:trans124}.

\begin{figure}[h] 
   \centering
   \includegraphics[width=3in]{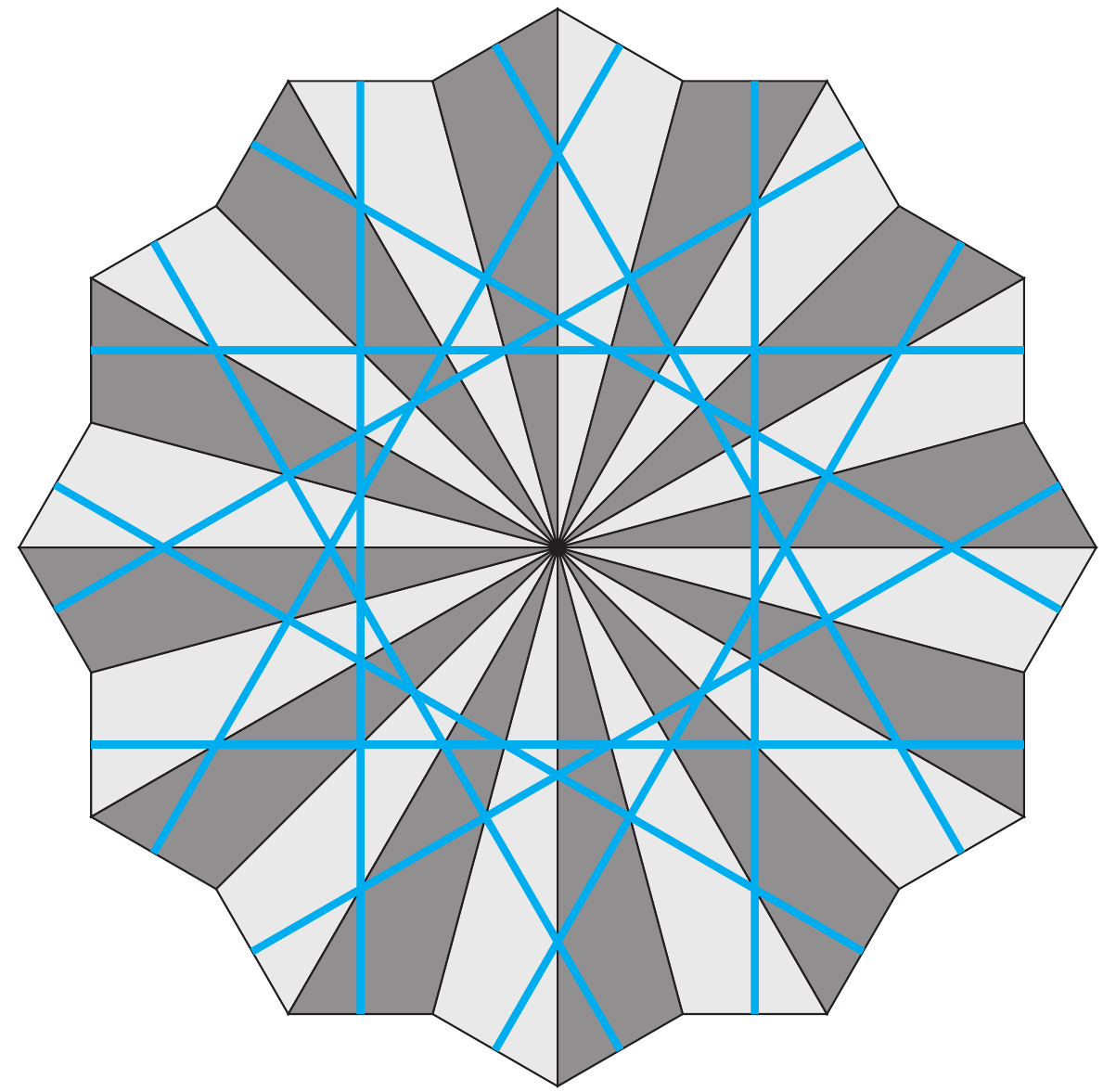} 
   \caption{Translation structure for $\omega_1$}
   \label{fig:trans124}
\end{figure}

We see that the edge identifications can be realized as Euclidean translations. Thus the identification space defines a translation structure on $S_{147}$ with cone points at the preimages of the $p_i$.

The same is true if we replace the hyperbolic $(1, 2, 1)\frac\pi{12}$ triangle by a Euclidean triangle with angles $(2,8,2)\frac\pi{12}$, $(4,4,4)\frac\pi{12}$, $(7,4,1)\frac\pi{12}$, respectively. Below is a representation of the translation structure corresponding to $\omega_2$. The two hexagons are identified using the two red vertical slits, creating an interior branch point with cone angle $4\pi$.

\begin{figure}[h] 
   \centering
   \includegraphics[width=4in]{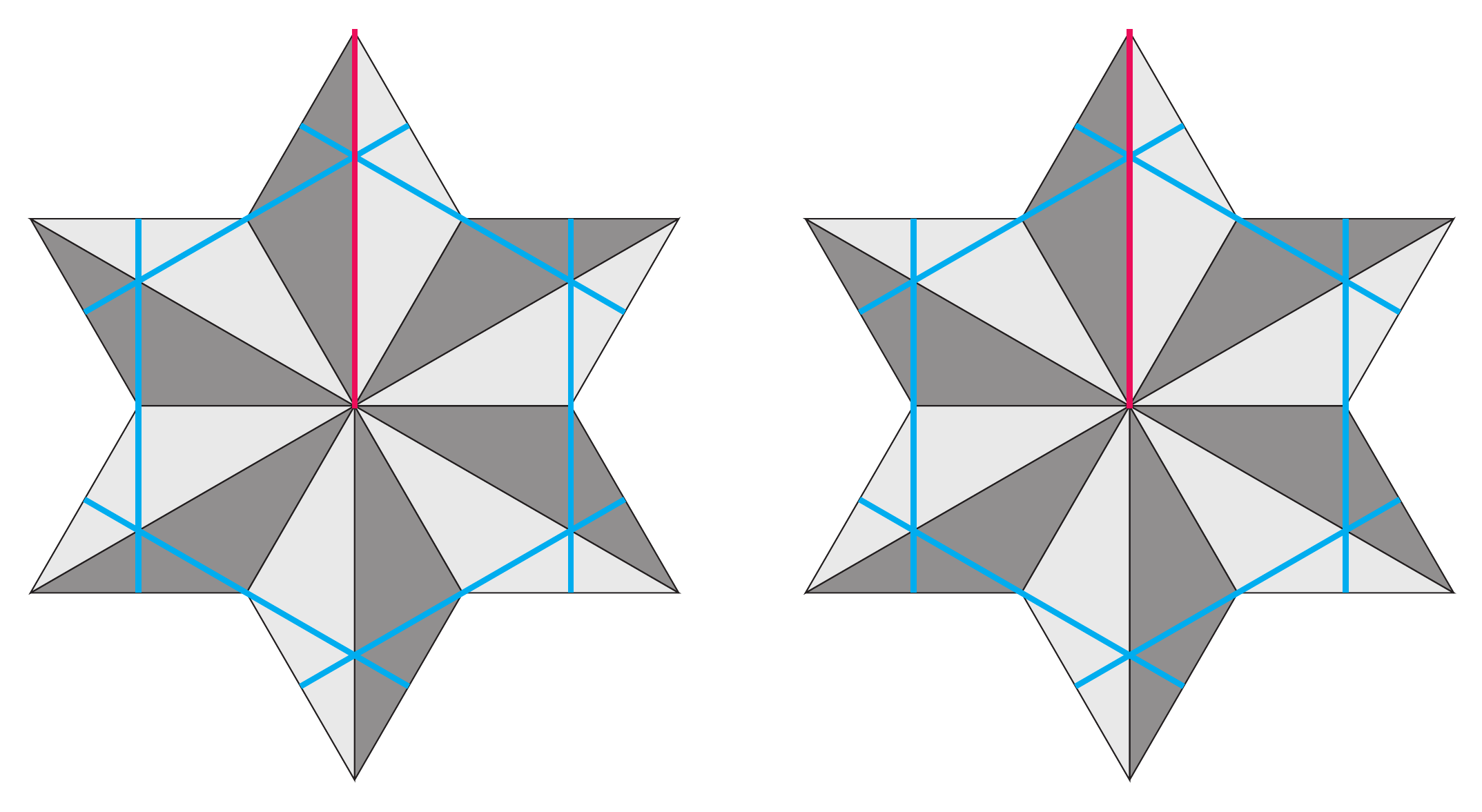} 
   \caption{Translation structure for $\omega_2$}
   \label{fig:124b}
\end{figure}

From the cone points of these translation structures we can determine the divisors of the corresponding forms: A cone angle of $2\pi k$ corresponds to a zero of order $k-1$. Thus (noting that $p_2$ has four preimages denoted by $p_{2,j}$)

\begin{align*}
   (\omega_1) = {}& 6 p_1\\
  (\omega_2) = {}& p_1+p_3+\sum_{j=1}^4 p_{2,j}\\
  ( \omega_3) = {}& 3p_1+3p_3\\
   (\omega_4) = {}& 6p_3 \ .
\end{align*}

\end{proof}

 In our case, the $\omega_i$ satisfy the easily verifiable equations

\begin{align*} 
   \omega_3^2 = {}& \omega_1 \omega_4 \\
   \omega_2^3 = {}& \omega_3( \omega_4^2- \omega_1^2) \ .
  \end{align*}

They represent I-WP as a complete intersection of a cubic and a quadric surface.

\begin{theorem}
The  Weierstrass data of I-WP can be given by 
the Gauss map $G=\omega_4/\omega_1$ and height differential $dh=\omega_3$ using 
\[
f(z)=\re \int^z \left(\frac1G-G, i\left(G+\frac1G \right), 2\right)\,dh
\]
\end{theorem}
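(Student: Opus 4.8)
The plan is to verify that the proposed Weierstrass triple $(G, dh)$ actually produces the I-WP surface by checking three things: that $G$ is the stereographic Gauss map already identified in Theorem 3.1, that $dh$ is a legitimate height differential compatible with $G$, and that the period conditions close up so that $f$ is well-defined on $S$ (or at least on the translational fundamental domain). Most of the work is bookkeeping with the divisors computed in the previous theorem.

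First I would check the Gauss map. We have $G = \omega_4/\omega_1 = y^6/\bigl(x(x-1)^2\bigr)\cdot\bigl(x(x-1)\bigr)/y$, but it is cleaner to use the exponent form directly: $\omega_4/\omega_1 = x^{7/12-1/12}(x-1)^{0} = x^{1/2}$... more carefully, $\omega_4/\omega_1 = x^{(7-1)/12}(x-1)^{(4-4)/12} = x^{1/2}$, so $G^2 = x$ up to the branch structure of $y$; one then reconciles this with the equation $y^{12}=x(x-1)^4$ to see that $G$ has degree $3$ on $S$ and is branched exactly over $0,1,\infty$ and one more point in the pattern forced by Theorem 3.1, matching the octahedral branch values $1,-i,-1,i$ and $0,\infty$ after a Möbius normalization. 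The key consistency check is that $G$ is branched of order $3$ at the preimages of the octahedron vertices and nowhere else, which is exactly the content of the first theorem of the excerpt, so this step reduces to identifying our $x$-coordinate's special points with the octahedral vertices via the dictionary set up in the last theorem of Section 4.

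Next I would verify the height differential. For a minimal surface the Weierstrass data $(G, dh)$ must satisfy that $dh$ has a zero of order $k$ exactly where $G$ has a zero or pole of order $k+1$ (so that the metric $ds = \tfrac12(|G|+|G|^{-1})|dh|$ extends smoothly and the immersion is regular), and that $dh$, $G\,dh$, $G^{-1}dh$ are holomorphic. Using $(\omega_3) = 3p_1 + 3p_3$, $(\omega_1)=6p_1$, $(\omega_4)=6p_3$ and $G = \omega_4/\omega_1$, one computes $(dh) = (\omega_3) = 3p_1+3p_3$ while $(G) = 6p_3 - 6p_1$; at $p_1$, $G$ has a pole of order $6$... here the multiplicities need the local degree of the covering, and the clean statement is that in terms of the octahedral coordinate $G$ itself, $dh$ has simple zeros at the six branch points, matching the simple branching of the degree-$3$ map there — this is the standard compatibility. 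Then $G\,dh \sim \omega_4\omega_3/\omega_1$ and $G^{-1}dh \sim \omega_1\omega_3/\omega_4 = \omega_3^2/\omega_4 = \omega_1\omega_4/\omega_4=\omega_1$ using the relation $\omega_3^2=\omega_1\omega_4$, so $G^{-1}dh=\omega_1$ is holomorphic; similarly $G\,dh = \omega_4\omega_3/\omega_1 = \omega_4^2/\omega_3$ using the same relation, and one checks its divisor is effective. The integrand components are linear combinations of $\omega_1$, $\omega_4$, and $\omega_3$, hence of the holomorphic basis, so $f$ has no interior singularities.

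The main obstacle is the period problem: showing $\re\int_\gamma (\tfrac1G - G, i(G+\tfrac1G), 2)\,dh$ vanishes along every cycle $\gamma$ that is killed in the translational quotient, so that $f$ descends to an embedding of the fundamental domain closing up under the I-Bravais lattice. I would handle this with symmetry rather than direct integration: the order-12 conformal automorphism of $S_{147}$ (equivalently $S_{3^{12}}$) established in Section 4, together with the reflectional symmetries coming from the Schwarz-triangle structure, acts on the space of holomorphic 1-forms and forces the relevant real periods to lie in a lattice; the straight-line symmetry lines (preimages of the octahedron-triangle edges) pin down the translation periods exactly as in Schwarz's original P- and D-surface analysis. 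I would invoke that the resulting surface has genus $4$, cubical point symmetry, and the correct Gauss map and height data — properties that characterize I-WP among triply periodic minimal surfaces by the references \cite{ka5}, \cite{Lidin-IWP}, \cite{fuwe1} — to conclude that $f$ parametrizes I-WP. The verification of the two algebraic relations $\omega_3^2=\omega_1\omega_4$ and $\omega_2^3=\omega_3(\omega_4^2-\omega_1^2)$ is the "easily verifiable" substitution of the $y$-expressions into $y^{12}=x(x-1)^4$, and I would relegate it to a one-line remark.
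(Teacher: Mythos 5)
Your overall strategy (identify $G$ with the degree-$3$ octahedral covering map from Theorem 3.1, check divisor compatibility of $dh$, settle the rest by symmetry) is in the same spirit as the paper's, but there is a concrete error at the very first step that propagates through the rest. You correctly compute $\omega_4/\omega_1 = x^{1/2}$, but $x^{1/2}$ is a degree-$6$ function on the $12$-fold cover $S_{147}$, not degree $3$; no reconciliation with $y^{12}=x(x-1)^4$ changes that. The resolution --- which the paper's proof supplies by simply writing $G = x^{1/4} = y^3/(x-1)$ --- is that $\omega_4/\omega_1$ equals $G^2$, not $G$: one has $G\,dh=\omega_4$ and $G^{-1}dh=\omega_1$, so the ratio of these two forms is $G^2$. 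Taking ``$G=\omega_4/\omega_1$'' at face value breaks your own bookkeeping: you record the divisor $6p_3-6p_1$ for this ratio, so your ``$G$'' would have an order-$6$ zero and pole, forcing $dh$ to vanish to order $6$ at $p_1,p_3$ for the metric to be regular, whereas $(\omega_3)=3p_1+3p_3$; likewise your $G\,dh=\omega_4^2/\omega_3$ has divisor $9p_3-3p_1$ and is not holomorphic. With $G=x^{1/4}$ everything matches: $(G)=3p_1-3p_3$, $(dh)=3p_1+3p_3$, and $dh$ does not (and need not) vanish at the four points $p_{2,j}$, where $G$ is unimodular rather than zero or infinite --- so your claim that $dh$ has ``simple zeros at the six branch points'' is wrong on both counts. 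The manipulation $\omega_1\omega_3/\omega_4=\omega_3^2/\omega_4$ is also false as written; the correct one-line computation is $G^{-1}dh=\sqrt{\omega_1/\omega_4}\,\omega_3=\omega_1$ using $\omega_3^2=\omega_1\omega_4$.

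The second gap is that you never pin down the two remaining normalizations, which is essentially the only content of the paper's proof once $G$ is identified (via the substitution $z=x^{1/4}$, giving $G=z$ and $dh=z\,(z(z^4-1))^{-2/3}dz$) with the quotient map of the order-$3$ automorphism: the L\'opez--Ros factor, fixed by requiring $|G|=1$ at the points over $p_2$ so the branch values are the octahedron vertices, and the Bonnet factor of $dh$, fixed by requiring $dh$ to be real along the curve over $(-\infty,0)$, a planar symmetry line. Without these, $(\lambda G,\, e^{i\theta}dh)$ is a two-parameter family of candidate data and you have not selected I-WP from it. Your treatment of the period problem is no worse than the paper's, which likewise leans on the known existence of I-WP and its cited Weierstrass data rather than integrating periods; but the degree of $G$ and the two normalization constants must be nailed down for the statement to be proved.
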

\begin{proof}
In terms of the equation $y^{12}=x(x-1)^4$ the data become 
\begin{align*} 
   G = {}& x^{1/4} \\
   = {}& \frac{y^3}{x-1} \\
   dh= {}&  x^{4/12-1}(x-1)^{4/12-1}\, dx\\
   = {}&   \frac{y^4}{x(x-1)^{2}}\, dx\ .
  \end{align*}
The substitution $z=x^{1/4}$ leads to the representation of the surface as a branched cover over the octahedron:

\begin{align*} 
   G = {}& z\\
   dh= {}&   \frac{z}{(z(z^4-1))^{2/3}}\, dz\ .
\end{align*}
  
  We see that our $G$ is the degree 3 quotient map by the order 3 rotation to the sphere as needed, and our  $dh$ is invariant under the rotation about the $x_3$-axis, making it the height differential.

The López-Ros factor for $G$ is correct because we want $|G(1)|=1$, and the Bonnet factor for $dh$ is correct because we want $dh$ to be real on $(-\infty, 0)$ which represents a planar symmetry line of the surface.
\end{proof}

We will now prove our main result:

\begin{theorem}
In the associate family of I-WP, the surfaces corresponding to $\theta = k\cdot 60^\circ$ for $k\in\Z$ are congruent to I-WP, while the surfaces corresponding to $\theta = 30^\circ+k\cdot 60^\circ$ are congruent to Steßmann's surface.
\end{theorem}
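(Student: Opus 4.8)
The plan is to compute, from the explicit Weierstrass data, how the conformal automorphisms of $S$ act on the pair $(G,dh)$ and hence on the associate family. Write $\Phi_\theta=\bigl(\tfrac1G-G,\;i(G+\tfrac1G),\;2\bigr)e^{i\theta}\,dh$ for the Weierstrass data of the associate surface $M_\theta=\re\int\Phi_\theta$, so that $M_0$ is I-WP. If $\phi$ is a holomorphic automorphism of $S$ with $G\circ\phi=R\circ G$ for a rotation $R$ of the Gauss sphere and $\phi^*dh=e^{i\alpha}dh$, then $\phi^*\Phi_0=e^{i\alpha}B\,\Phi_0$ for a suitable $B\in O(3)$, whence $M_\theta\circ\phi=B\circ M_{\theta+\alpha}$ up to a translation; since $\phi$ is an automorphism of $S$ and $B$ an isometry of $\R^3$, this means $M_\theta$ and $M_{\theta+\alpha}$ are congruent. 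So it suffices to produce automorphisms realizing Bonnet shifts of $120^\circ$ and $60^\circ$.

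For the $120^\circ$ shift we use the hidden order $12$ automorphism $\tau$. On $S_{147}$ with $y^{12}=x(x-1)^4$ (and $p_1=0$, $p_2=1$, $p_3=\infty$) it is the deck transformation $(x,y)\mapsto(x,\zeta_{12}y)$, $\zeta_{12}=e^{\pi i/6}$ (on the $24$-gon of Figure~\ref{fig:124}, the $30^\circ$ rotation). From $G=x^{1/4}=y^3/(x-1)$ and $dh=\dfrac{y^4}{x(x-1)^2}\,dx$ we read off $\tau^*G=\zeta_{12}^{3}G=iG$, a $90^\circ$ rotation of the Gauss sphere about the $x_3$-axis, and $\tau^*dh=\zeta_{12}^{4}\,dh=e^{2\pi i/3}\,dh$, a primitive cube root of unity. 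Hence $\tau^*\Phi_0=e^{2\pi i/3}A\,\Phi_0$ with $A\in SO(3)$ the corresponding $90^\circ$ rotation, so $M_\theta\circ\tau=A\circ M_{\theta+120^\circ}$ and $M_\theta\cong M_{\theta+120^\circ}$ for every $\theta$. Equivalently, the order $3$ hidden automorphism $\tau^4$—the deck transformation of the Gauss map—fixes $G$ and multiplies $dh$ by $e^{2\pi i/3}$, so $\tau^{4*}\Phi_0=e^{2\pi i/3}\Phi_0$ and $M_{\theta+120^\circ}=M_\theta\circ\tau^4$ up to a translation; cubing $\tau$ recovers the visible order $4$ rotation and the trivial shift, as it must.

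To improve $120^\circ$ to $60^\circ$ we invoke a different automorphism—necessarily not a power of $\tau$, since $\tau$ only scales $dh$ by cube roots of unity. The involution $z\mapsto1/z$ is one of the octahedral rotations of the Gauss sphere (the half-turn about the $x_1$-axis); it therefore preserves the branch locus of $G$ and lifts through $G$ to automorphisms $\mu$ of $S$, three of them differing by the deck group. On the octahedral model $dh=\dfrac{z\,dz}{\bigl(z(z^4-1)\bigr)^{2/3}}$, and the substitution $z\mapsto1/z$ gives $\mu^*dh\in\{e^{-i\pi/3},\,e^{i\pi/3},\,-1\}\cdot dh$ depending on the lift; choose the lift with $\mu^*dh=e^{i\pi/3}\,dh$, a primitive sixth root of unity. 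With $G\circ\mu=1/G$ one then computes $\mu^*\Phi_0=e^{i\pi/3}\operatorname{diag}(-1,1,1)\,\Phi_0$, so $M_\theta\circ\mu=\operatorname{diag}(-1,1,1)\circ M_{\theta+60^\circ}$: the surface $M_\theta$ is congruent to a mirror image of $M_{\theta+60^\circ}$. Since congruence of surfaces in $\R^3$ permits orientation-reversing isometries—and in any event I-WP is amphichiral by its octahedral symmetry, while Steßmann's surface carries a reflective symmetry coming from the Schoenflies polygon it spans—this yields $M_\theta\cong M_{\theta+60^\circ}$ for all $\theta$.

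Combining, $M_\theta\cong M_{\theta+60^\circ}$ for every $\theta$, so all $M_{k\cdot60^\circ}$ are congruent to $M_0=$ I-WP, and all $M_{30^\circ+k\cdot60^\circ}$ are congruent to $M_{30^\circ}\cong M_{90^\circ}$, the conjugate surface of I-WP, which is Steßmann's surface. The points that need care are that the order $12$ automorphism alone gives merely the $120^\circ$-periodicity—so the half-turn $z\mapsto1/z$, which multiplies $dh$ by a primitive sixth root of unity while simultaneously mirroring the surface, is what supplies the missing factor of two—and the attendant orientation bookkeeping in that last step; both become routine once the action of $\operatorname{Aut}(S)$ on $dh$ is written down, which is immediate from the explicit data.
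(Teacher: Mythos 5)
Your argument is correct, and its first half --- the order-$3$ deck transformation $(x,y)\mapsto(x,e^{2\pi i/3}y)$ of $S_{147}$, which fixes $G$ and multiplies $dh$ by $e^{2\pi i/3}$, hence gives $M_\theta\cong M_{\theta+120^\circ}$ --- is exactly the paper's argument. Where you genuinely diverge is in upgrading $120^\circ$ to $60^\circ$. The paper does this in one sentence: in \emph{any} associate family, $M_\theta$ and $M_{\theta+180^\circ}$ are congruent (since $e^{i\pi}\Phi=-\Phi$ integrates to the point reflection of the surface), and combining the $180^\circ$ and $120^\circ$ periods yields $60^\circ$. You instead build a second, non-deck automorphism: a lift $\mu$ of the half-turn $z\mapsto 1/z$ of the Gauss sphere. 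Your computation is right --- on the model $w^3=z(z^4-1)$ with $dh=z\,dz/w^2$, the lift is $(z,w)\mapsto(1/z,\eta w/z^2)$ with $\eta^3=-1$, giving $\mu^*dh=\eta\,dh$ and, for $\eta=e^{i\pi/3}$, $\mu^*\Phi_0=e^{i\pi/3}\operatorname{diag}(-1,1,1)\,\Phi_0$ --- and it buys you an explicit intrinsic automorphism realizing the $60^\circ$ congruence, at the cost of the orientation bookkeeping in your third paragraph. That bookkeeping is in fact a non-issue: the paper's own $180^\circ$ step uses the point reflection $-I$, which is likewise orientation-reversing on $\R^3$, so both proofs use the same notion of congruence, and in any case I-WP has planar reflection symmetries. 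Two small points to tighten: the claim that $z\mapsto1/z$ lifts through $G$ merely \emph{because} it preserves the branch locus is not a valid inference for a general cyclic branched cover (compatibility with the monodromy is needed); you should justify it, as you implicitly do, by the explicit substitution in $w^3=z(z^4-1)$. And the general lemma of your first paragraph is only used in the two concrete instances you verify by hand, so it could simply be dropped.
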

\begin{proof}
Denote by $\phi(x,y) = (x,e^{2\pi i/3} y)$. This is an automorphism of $S_{147}$ of order 3 that acts on the Weierstrass representation by
\begin{align*} 
   G\circ \phi = {}& G\\
   \phi^*dh= {}&  e^{2\pi i/3} dh
\end{align*}
so that  in the associate family of  the  I-WP surface any two surfaces with Bonnet angles $\theta$ and $\theta+2\pi/3$ are isometric. In any associate family, the surfaces with Bonnet angles $\theta$ and $\theta+\pi$ are always isometric. The claims follow.
\end{proof}

\begin{figure}[h] 
   \centering
   \includegraphics[width=1.2in]{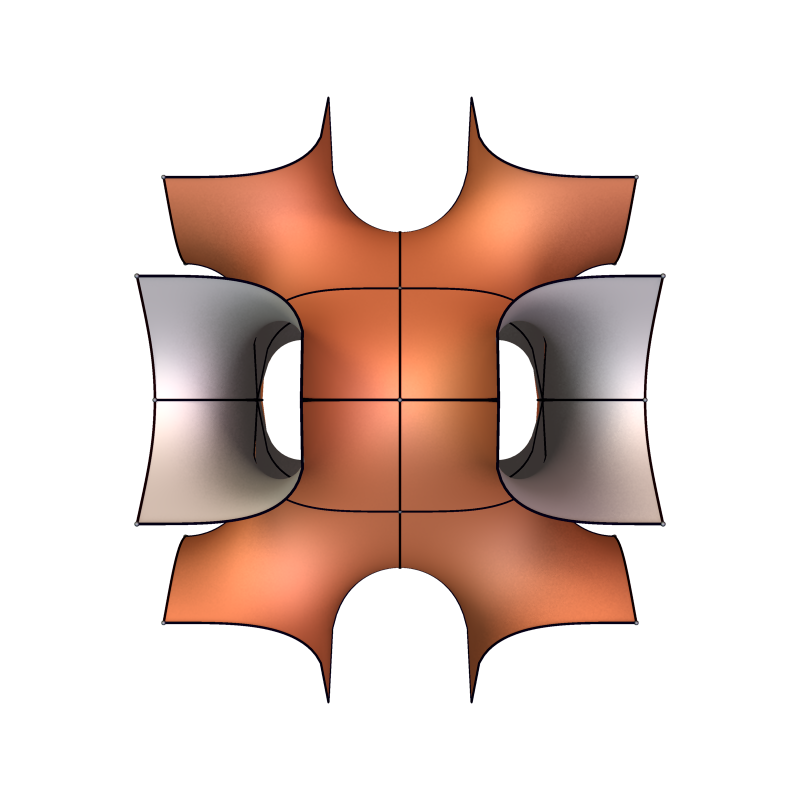} 
   \includegraphics[width=1.2in]{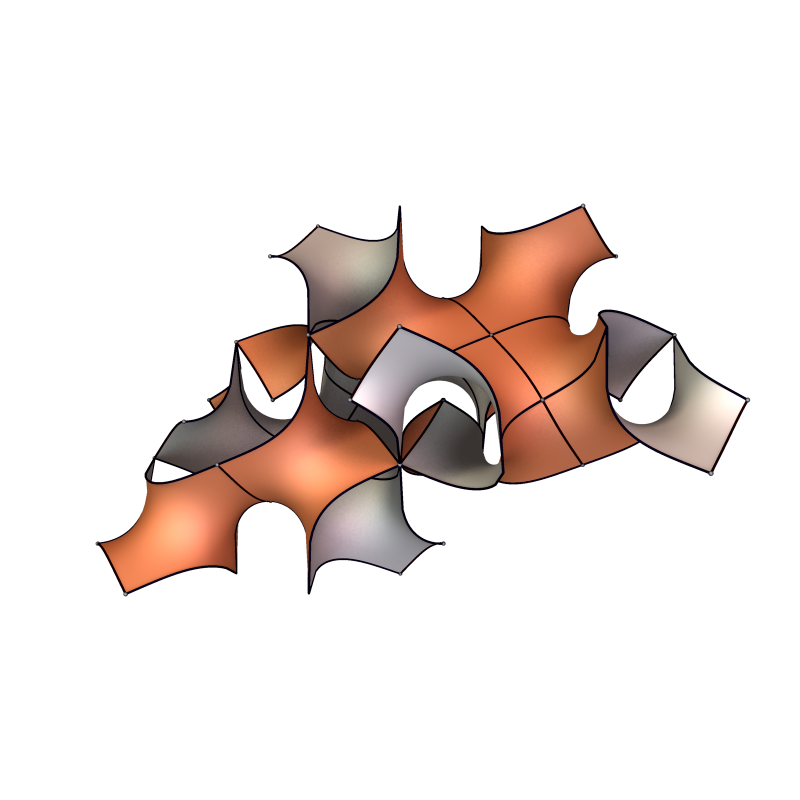} 
   \includegraphics[width=1.2in]{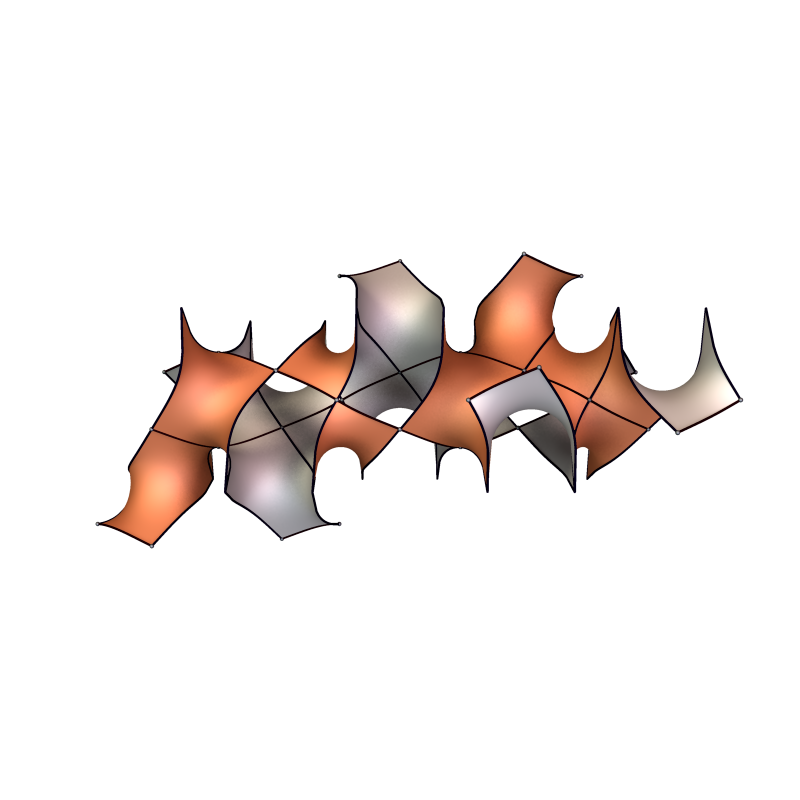} 
   \includegraphics[width=1.2in]{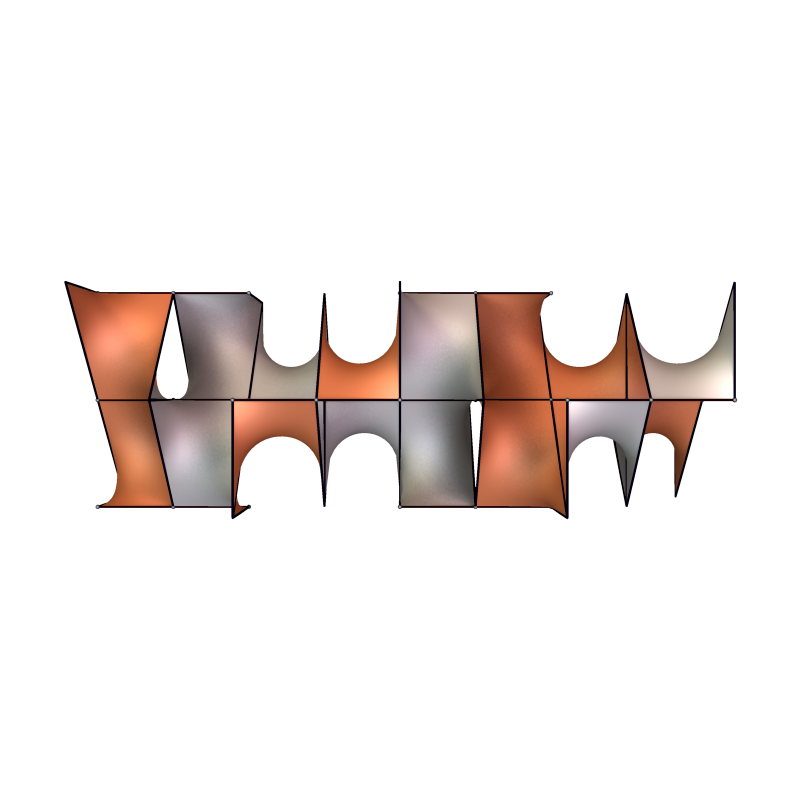} 
   \includegraphics[width=1.2in]{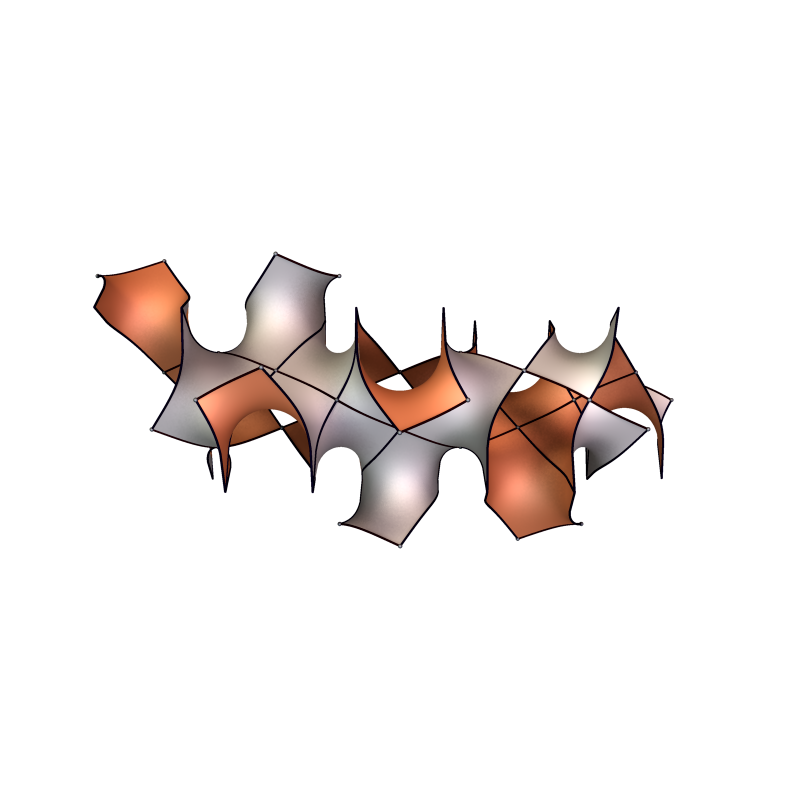} 
   \includegraphics[width=1.2in]{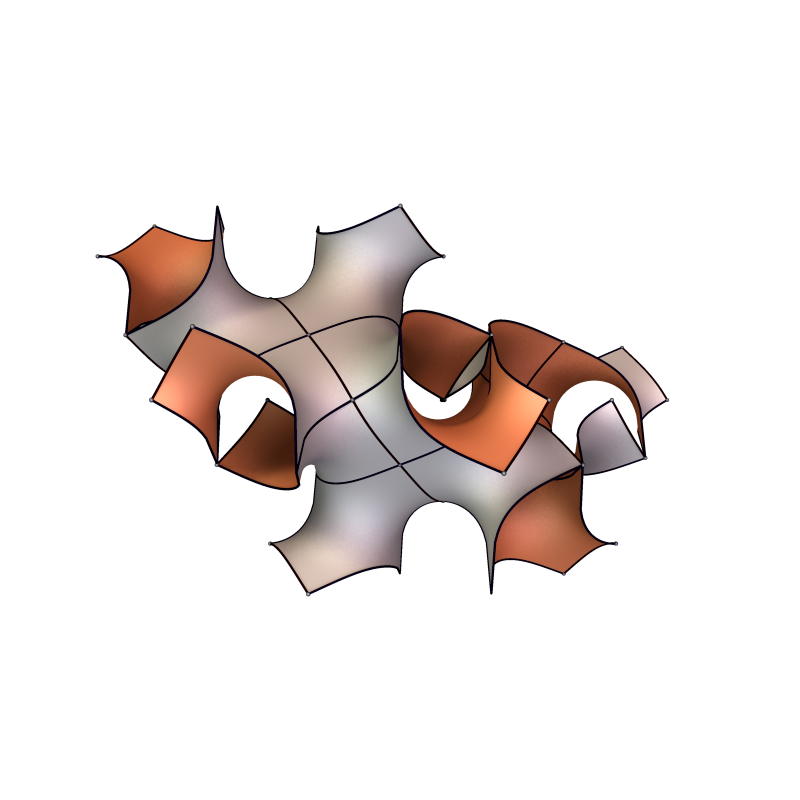} 
   \includegraphics[width=1.2in]{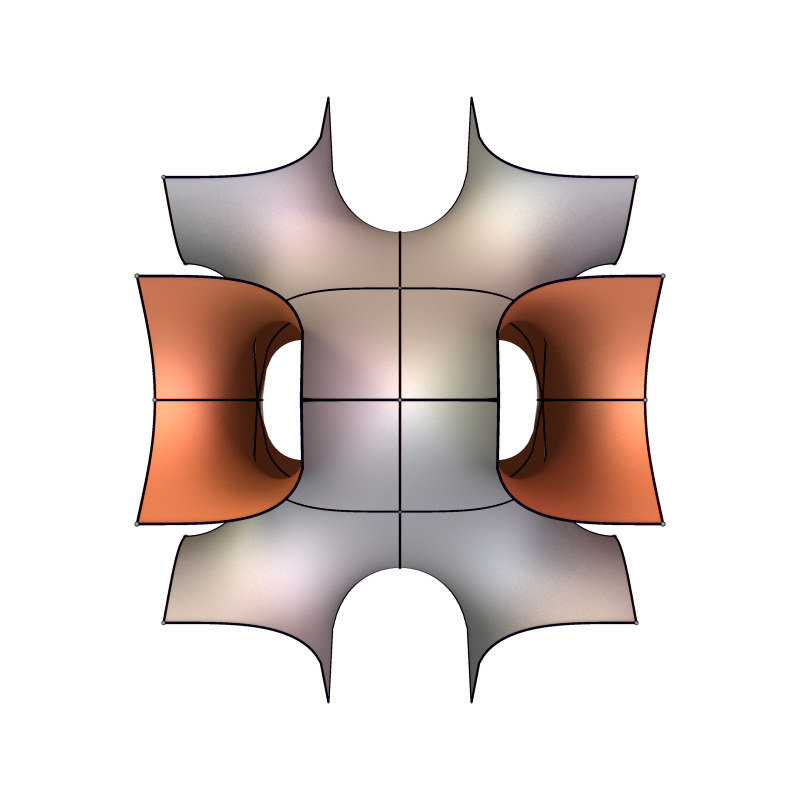} 
   \caption{The associate family of I-WP from $0^\circ$ to $60^\circ$}
   \label{fig:associate}
\end{figure}

\bibliography{minlit}
\bibliographystyle{alpha}

\end{document}